\documentclass[a4paper,leqno,11pt,twoside]{amsart}
\usepackage{amsmath,amsfonts,amssymb,amsthm,amscd}
\usepackage[utf8]{inputenc}
\usepackage[english]{babel}
\usepackage[colorlinks=true,citecolor=blue, urlcolor=blue, linkcolor=blue,pagebackref]{hyperref}
\usepackage[top=1.1in,bottom=1.1in,left=1.in,right=1.in]{geometry} 
\usepackage{graphicx}
\usepackage{wrapfig}
\usepackage{paralist}
\usepackage{tabto}
\usepackage{standalone}
\usepackage{xfrac}
\usepackage{float}
\usepackage{tikz-cd}

\usepackage{pgf,tikz}
\usetikzlibrary{arrows,chains,
 decorations.pathreplacing,
shapes,%
}

\usepackage[all]{xy} 
\usepackage{enumerate}
\usepackage{xcolor}
\usepackage{aliascnt}

\theoremstyle{plain}

\newtheorem{thmIntr}{Theorem}

\newaliascnt{propIntr}{thmIntr}
\newtheorem{propIntr}[propIntr]{Proposition}

\newaliascnt{corIntr}{thmIntr}

\newaliascnt{lem}{thm}
\newtheorem{lem}[lem]{Lemma}
\aliascntresetthe{lem}

\newaliascnt{cor}{thm}
\newtheorem{cor}[cor]{Corollary}
\aliascntresetthe{cor}

\newaliascnt{prop}{thm}
\newtheorem{prop}[prop]{Proposition}
\aliascntresetthe{prop}

\theoremstyle{definition}

\newaliascnt{rem}{thm}

\aliascntresetthe{rem}

\newaliascnt{defn}{thm}

\aliascntresetthe{defn}

\newaliascnt{ex}{thm}
\newtheorem{ex}[ex]{Example}
\aliascntresetthe{ex}

\newaliascnt{setup}{thm}
\newtheorem{setup}[setup]{Set-up}
\aliascntresetthe{setup}

\numberwithin{equation}{section}

\definecolor{applegreen}{rgb}{0.55, 0.71, 0.0}
\newcommand{\commg}[1]{{\color{applegreen}#1}} %amb un altre color, per si hi ha diàleg

\newcommand{\covgen}{\textnormal{cov.gen}}

\newcommand{\Sym}{\textnormal{Sym}}

\title[Curves on the product of two $K$-trivial surfaces]{Curves on the product of two $K$-trivial surfaces}
\author[F.~Moretti and G.~Passeri ]{Federico Moretti and Giovanni Passeri}

\address{Department of Mathematics, State University of New York at Stony Brook, 100 Nicolls Rd, Stony Brook, NY 11794, USA}
\email{federico.moretti@stonybrook.edu}
\email{giovanni.passeri@stonybrook.edu} 

\begin{document}

\begin{abstract}
We study curves on the product of two $K$-trivial surfaces. In the case of the product of two very general abelian surfaces $A_1\times A_2$, we prove that the minimal genus of a non-trivial curve on $A_1\times A_2$ is $6$.
\end{abstract}
\keywords{Abelian surfaces, $K3$ surfaces, correspondences, covering genus, measures of irrationality}
\subjclass[2020]{14H10, 14H40, 14J28}

\maketitle
%--------------------------------------

\setcounter{tocdepth}{1}

\section{Introduction}

The study of curves on abelian varieties has long been central to algebraic geometry. For instance, determining the minimal algebraic curve class on a very general abelian variety is a difficult open problem; only recently has it been resolved up to dimension $6$ (cf.\ \cite{engel2025optimalityprymtyurinconstructionmathcala6,engel2026matroidsintegralhodgeconjecture,AlexeevDonagiFarkasIzadiOrtega+2020+163+217}). Another fundamental question concerns the minimal genus of a curve on a very general abelian variety. The best available lower bound in dimension $a>3$ is $\frac{a(a-1)}{2}+1$ (see \cite[Theorem~1]{pietro1995generic}), and this bound is optimal for $a=4,5$. The main result of this note is that the minimal genus of a non-trivial curve on the product of two very general abelian surfaces is $6$. Note that, in view of the bound above, this is smaller than the corresponding lower bound for a very general abelian fourfold (of course, a very general abelian fourfold is not isogenous to a product of abelian surfaces). A more Hodge-theoretic reformulation is as follows: given two very general abelian surfaces $A_1$ and $A_2$, the minimal genus of a Jacobian admitting $A_1\times A_2$ as a quotient is $6$.

Our motivation comes from invariants introduced by Lazarsfeld and Martin to compare the birational geometry of projective varieties via correspondences. Let $X$ and $Y$ be irreducible projective varieties of the same dimension $n$. A correspondence between $X$ and $Y$ is an irreducible, reduced $n$-dimensional subscheme $Z\subset X\times Y$ dominating both factors under the projections. Following \cite{RobOli}, one defines
\[
\mathrm{corr.deg}(X,Y)=\min_{Z\subset X\times Y}\Bigl\{\deg(Z\to X)\cdot \deg(Z\to Y)\Bigr\},
\]
where $Z$ ranges over correspondences. Note that $\mathrm{corr.deg}(X,Y)=1$ if and only if $X$ is birational to $Y$.

A different way to measure the complexity of a variety is through curves. Recall the covering genus
\[
\mathrm{cov.gen}(Z)=\min\Bigl\{\, g \ \Big|\ \exists \text{ an irreducible curve of geometric genus } g \text{ passing through a general point of } Z \Bigr\}.
\]
One may then define
\[
\mathrm{cov.gen}(X,Y)=\min_{Z\subset X\times Y}\mathrm{cov.gen}(Z),
\]
where, again, $Z$ ranges over correspondences. If $X$ and $Y$ are birational, then $\mathrm{cov.gen}(X,Y)=\mathrm{cov.gen}(X)=\mathrm{cov.gen}(Y)$. The converse does not hold: for very general $K3$ surfaces $S_1,S_2$ one has $\mathrm{cov.gen}(S_1,S_2)=\mathrm{cov.gen}(S_1)=\mathrm{cov.gen}(S_2)=1$ (see \cite[Proposition~1.14]{RobOli}).

The results of this note compute $\mathrm{cov.gen}(X,Y)$ for further pairs of $K$-trivial varieties. In the case of a $K3$ surface and an abelian surface we obtain:

\begin{propIntr}\label{SA}
Let $S$ and $A$ be a very general $K3$ surface and a very general abelian surface, respectively. Then
\[
\covgen(S,A)=3.
\]
\end{propIntr}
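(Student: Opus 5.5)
We need both a lower bound $\covgen(S,A)\ge 3$ and a construction achieving $3$.

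\emph{Lower bound.} Let $Z\subset S\times A$ be a correspondence, and suppose a family of curves of geometric genus $g$ covers $Z$. Take a general member $C$ with normalization $\tilde C\to Z$. Its image in $A$ passes through a general point of $A$, since $Z$ dominates $A$, and so it is a non-constant curve. A very general abelian surface contains no elliptic curves, and rational curves are excluded in any case, so $\tilde C$ maps onto a curve of geometric genus at least $2$ in $A$. Hence $g\ge 2$.

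To rule out $g=2$, note that the genus-$2$ curves through a general point of $A$ come in a $2$-dimensional family: these are the translates of curves in a genus-$2$ family $\{C_t\}$ covering $A$, and $J(\tilde C_t)\to A$ is an isogeny. The curve $\tilde C$ must also map non-trivially to $S$, since otherwise $Z$ would not dominate $S$. The map $\tilde C\to S$ therefore induces, via the Albanese/Jacobian, a non-trivial Hodge-theoretic relation. Concretely, the family of genus-$2$ curves in $Z$ would make $Z$ dominated by a family whose rational map to $S$ factors through abelian-surface-type data. A cleaner route is to show that the induced map $H^{2,0}(S)\to H^{2,0}(Z)$ and $H^{2,0}(A)\to H^{2,0}(Z)$ must both be injective. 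Using the Mumford-type argument, for a covering family of curves $\mathcal C\to B$ with $\dim B=1$, the transcendental part of $H^2(Z)$ embeds into a piece controlled by $H^1(\tilde C_b)\otimes H^1(B)$. If $g(\tilde C)=2$ and $\tilde C\to A$ is an isogeny on Jacobians, then the $H^1$ of the fiber is exactly $H^1(A)$. The transcendental lattice $T(S)$ must then appear inside $H^1(A)\otimes H^1(B)$ as Hodge structures. By very generality, $T(S)$ is an irreducible Hodge structure of rank $21$ with $\mathrm{End}=\mathbb Z$, and I expect to show it cannot be a sub-Hodge structure of the product of $H^1(A)$ and $H^1$ of a curve in a way compatible with $T(A)\subset\wedge^2H^1(A)$. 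Making this precise, perhaps by passing to the fibration of the covering family and using the Kuga–Satake dimension count or Mumford–Tate group considerations, is the main obstacle I anticipate. It is the step I would attack first, by reducing to $T(S)\otimes\mathbb Q$ being isomorphic to a summand of $H^1(A)\otimes H^1(J(B))$ for a rational map $Z\dashrightarrow B$ fibered by the genus-$2$ curves.

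\emph{Upper bound.} The aim is to find a correspondence $Z$ covered by genus-$3$ curves. Take a very general abelian surface $A$ with a $(1,2)$-polarization, whose linear system gives genus-$3$ curves, and a very general $K3$ surface $S$ containing a covering family of elliptic curves, for instance through elliptic fibrations of isogenous-type constructions. The natural choice is to look for curves $C$ of genus $3$ admitting a double cover of an elliptic curve $E$, a map to $A$, and a map $E\to S$ through a rational/elliptic curve in $S$. One would then set $Z$ to be the closure of the images of $C\to S\times A$ as $C$ varies in a $2$-dimensional family, arranged so that the images sweep both factors. The dimension count is as follows. Genus-$3$ bielliptic curves form a $4$-dimensional moduli, and maps to $A$ impose conditions whose generic count leaves enough parameters. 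Elliptic curves on $S$ through a general point exist by Bogomolov–Mumford / Chen, forming a $1$-parameter family. Combining these gives a $2$-dimensional family, and I would check that it is non-degenerate so that $Z$ is genuinely $2$-dimensional and dominates both factors.
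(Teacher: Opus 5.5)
\textbf{The lower bound has a genuine gap at the step that matters: excluding genus $2$.} You reduce this to showing that $T(S)_{\mathbb Q}$ is not a sub-Hodge structure of $H^1(A)\otimes H^1(J(B'))$. You then only say you \emph{expect} this and call it the main obstacle, so nothing is proved. The reduction also relies on an unstated step. The transcendental part lands in $H^1(\widetilde C_b)\otimes H^1(B)$ only after you note that the family is isotrivial (only countably many genus-$2$ curves have Jacobian isogenous to $A$) and base change to a product $C\times B'$. For a non-isotrivial family you would get $H^1(B,R^1)$ instead.

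The paper avoids Hodge theory entirely with a dimension count:
\begin{itemize}
\item If a genus-$2$ curve $C$ maps non-constantly to both $A$ and $S$, then $\mathrm{Jac}(C)\sim A$, since $A$ is simple.
\item The map $C\to S$ is then birational onto its image. The image cannot be elliptic, since $\mathrm{Jac}(C)\sim A$ is simple. It cannot be rational for the general member, since the images sweep out $S$, which is not uniruled.
\item By Lemma~\ref{stime}, the contraction $T_bB\otimes H^0(\omega_S)\hookrightarrow H^0(\omega_{\widetilde C_b})$ gives $\dim B\le g$ for families of curves on a $K$-trivial surface. Hence genus-$2$ curves admitting a birational map to $S$ lie in a countable union of loci of dimension $\le 2$ in $\mathcal M_2$.
\item The abelian surfaces isogenous to their Jacobians therefore form a countable union of proper closed subsets of the $3$-dimensional $\mathcal A_2$, which a very general $A$ avoids.
\end{itemize}
Your Mumford--Tate route can plausibly be completed. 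In weight one, an irreducible constituent of $H^1(J)$ cannot be moved nontrivially by both the $C_2$-factor coming from $A$ and the $B_{10}$-factor coming from $S$. So the $\mathrm{Sp}_4$-invariant part of $H^1(A)\otimes H^1(J)$ is fixed by the $B_{10}$-factor, which acts nontrivially on $T(S)$. But that argument is not in your proposal.

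\textbf{The upper bound is only a sketch of the right idea.} Curves $C_t$ in the $(1,2)$-pencil are bielliptic with $\mathrm{Jac}(C_t)\sim A\times E_t$. However, $C_t$ maps to an elliptic curve $E_b\subset S$ only if $E_t$ and $E_b$ are isogenous. That is a countable condition, and your parameter count (``maps to $A$ impose conditions whose generic count leaves enough parameters'') cannot produce it.

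What is needed, and what the paper supplies, is that both elliptic families vary in moduli. The curve $E_t$ varies because the pencil is non-isotrivial (it contains a nodal member), combined with Torelli. The surface $S$ carries a covering family of elliptic curves with varying $j$-invariant by Chen--Gounelas. An irreducible component of the locus of triples $(t,b,f\colon C_t\to E_b)$ dominating both bases then gives a one-parameter family of genus-$3$ curves. Its image in $A\times S$ is a surface dominating both factors.

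Two smaller points. An elliptic curve cannot map non-constantly onto a rational curve in $S$, so ``rational/elliptic'' should just be ``elliptic''. Also, a genuinely $2$-dimensional family of curves in $S\times A$ generally sweeps out a threefold. What you need is a one-parameter family whose union is a surface.
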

For two simple abelian surfaces $A_1$ and $A_2$, the invariant $\mathrm{cov.gen}(A_1,A_2)$ coincides with the minimal geometric genus of a non-trivial curve $C\subset A_1\times A_2$, where non-trivial means that both projections $C\to A_1$ and $C\to A_2$ are non-constant. Indeed, given such a curve $C$, the surface $C^{(2)}$ maps to $A_1\times A_2$ and yields a correspondence covered by curves of genus $g(C)$. Conversely, if $Z\subset A_1\times A_2$ is a correspondence dominating both $A_1$ and $A_2$, then the general member of any covering family of curves on $Z$ must have non-constant projections to both factors: otherwise, if all members were contained in the fibers of one of the projections, the image of $Z\to A_i$ would be one-dimensional.

Our main theorem can then be stated in the following form:

\begin{thmIntr}\label{A12}
Let $A_1,A_2$ be a very general pair of abelian surfaces. Then
\[
\covgen(A_1,A_2)=6.
\]
\end{thmIntr}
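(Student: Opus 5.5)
The plan has two halves: a lower bound $g(C)\ge 6$ for every non-trivial curve $C\subset A_1\times A_2$, and a construction of a genus-$6$ example.

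\textbf{Lower bound.} Let $C\subset A_1\times A_2$ be non-trivial with normalization $\tilde C$ of genus $g$. Since $A_1$ and $A_2$ are very general, they are simple and non-isogenous, and $\mathrm{End}(A_i)=\mathbb Z$. Both projections are non-constant, so $C$ generates an abelian subvariety surjecting onto each simple factor. Because $\mathrm{Hom}(A_1,A_2)=0$, this subvariety is all of $A_1\times A_2$, and hence $J(\tilde C)$ surjects onto $A_1\times A_2$. Write $J(\tilde C)\sim A_1\times A_2\times B$ up to isogeny, so $g\ge 4$. We then exclude $g=4$ and $g=5$ in turn.
\begin{itemize}
\item[$g=4$.] Here $J(\tilde C)$ is isogenous to $A_1\times A_2$. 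The Jacobian locus in $\mathcal A_4$ is a divisor, and the locus of abelian fourfolds isogenous to a product of two abelian surfaces has dimension $3+3=6$. For very general $(A_1,A_2)$, the points isogenous to $A_1\times A_2$ form a countable set of points of this $6$-dimensional family. A dimension count is then not enough, because a $6$-dimensional family could meet the $9$-dimensional Jacobian divisor. I would therefore instead use Pirola's method \cite{pietro1995generic}: deform $(A_1,A_2)$ and require that a curve deforms along. This yields the infinitesimal condition that the product $\mathrm{Sym}^2 H^0(K_{\tilde C})\to H^0(2K_{\tilde C})$, restricted to $\mathrm{Sym}^2 H^{1,0}(A_1)\oplus \mathrm{Sym}^2 H^{1,0}(A_2)$ (pulled back), must be injective on the corresponding $3+3=6$-dimensional space of deformation directions.
\item[$g=5$.] Here there is an extra elliptic factor $B$, and the same infinitesimal argument applies: the $1$-forms pulled back from $A_1$ and from $A_2$ span $2$-planes $V_1,V_2\subset H^0(K_{\tilde C})$, and the quadrics $\mathrm{Sym}^2V_1+\mathrm{Sym}^2V_2$ must map injectively to $H^0(2K_{\tilde C})$ while also accounting for the full $6$-dimensional deformation space. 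One then needs to show this is incompatible with $\tilde C$ having genus $\le5$. The concrete route is to use the canonical model $\tilde C\subset\mathbb P^{g-1}$: the span of the forms from $V_i$ gives a projection $\tilde C\to\mathbb P^1$, so Castelnuovo--de Franchis-type or gonality arguments should force a contradiction with the required independence in $H^0(2K)$.
\end{itemize}
I expect this exclusion of $g=4,5$ to be the main obstacle, in particular handling hyperelliptic and trigonal $\tilde C$ and making the Pirola-type infinitesimal argument precise.

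\textbf{Upper bound.} I would exhibit a genus-$6$ curve mapping non-trivially to $A_1\times A_2$. The natural approach is via Prym or bigonal-type constructions. A very general abelian surface of a suitable polarization type is the Prym variety of a double cover $\tilde D\to D$ with $D$ of genus $3$ and $\tilde D$ of genus $5$; alternatively, it is a quotient of the Jacobian of a genus-$3$ curve. I would look for a curve $C$ of genus $6$ with two maps $C\to D_1$, $C\to D_2$, or with a $(\mathbb Z/2)^2$-action whose intermediate quotients yield Pryms isomorphic to $A_1$ and $A_2$. For instance, take a $(\mathbb Z/2)^2$-cover $C\to\mathbb P^1$ and decompose $J(C)$ into the Jacobians of the intermediate quotient curves. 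One then counts moduli to check that the pair of resulting abelian surfaces varies in a $6$-dimensional family, i.e.\ dominates $\mathcal A_2\times\mathcal A_2$ for appropriate polarizations. Verifying this dominance, via a differential or degeneration computation, is the second step needing care. Since the dimension of $\mathcal A_2\times\mathcal A_2$ is $6$ and that of $\mathcal M_6$ is $15$, there is ample room, so the Jacobian is not the bottleneck; the bottleneck is finding the right cover.

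Together the two halves give $\covgen(A_1,A_2)=6$.
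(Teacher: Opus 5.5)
\textbf{Review of the proposal.} Your reduction to $g\ge 4$ is fine. However, the core of the theorem, excluding $g=5$ (and $g=4$), is only named, not argued. The condition you single out is also not where the obstruction lives. The map $\Sym^2V_1\oplus\Sym^2V_2\to H^0(2K_{\tilde C})$ sends a $6$-dimensional space into a $12$-dimensional one, and nothing you say suggests why its injectivity should fail.

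The operative condition is injectivity \emph{modulo the image of the mixed part} $W=(V_1\otimes V_2)\oplus(V_E\otimes V_1)\oplus(V_E\otimes V_2)$. Tangent vectors to the locus of genus-$5$ curves with $J(C)\sim A_1\times A_2\times E$ are annihilated by the image of $W$. So that locus has dimension at most $12-\mathrm{rank}(m)$, where $m\colon W\to H^0(2K_C)$ is the multiplication map, and the whole proof consists in showing $\mathrm{rank}(m)\ge 7$.

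Hopf's lemma only gives $\mathrm{rank}(V_1\otimes V_2\to H^0(2K_C))\ge 3$, and rank $3$ does occur. An example is the normalization of $C_1\times_{\mathbb P^1}C_2$ with four common branch points of the two $g^1_2$'s: a genus-$5$ curve with $J\sim J(C_1)\times J(C_2)\times E$, on which $|V_1|$ and $|V_2|$ have the same moving part.

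The paper handles this with ingredients absent from your plan:
\begin{itemize}
\item If the pencils share a moving part, their fixed parts are the ramification divisors of the projections $\pi_i$.
\item The bound $g\ge\dim$ for a family of curves in a fixed abelian surface (strict when the map is not an immersion), applied to the $7$-dimensional families $B_{A_i}$ obtained after adding translations, forces both $\pi_i$ to be non-birational onto genus-$2$ curves.
\item Then $C$ dominates a fiber product with at least four shared branch points. This is a positive-codimension condition on pairs of genus-$2$ curves, contradicting $\dim B=10$.
\item Next, $\dim(\mathrm{Im}\,m_{1,2}\cap\mathrm{Im}\,m_E)\le 1$ is proved by a Clifford argument when both $\pi_i$ are birational, and via the codifferential of the ramified Prym map $\mathcal R_{2,4}\to\mathcal A_3$ when $\pi_2$ has degree $2$.
\end{itemize}
This case analysis uses the global dimension of the hypothetical dominating family at every step. ``Gonality or Castelnuovo--de Franchis arguments'' on a single canonical curve are not a substitute you have shown to work.

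For $g=4$, you dismiss dimension counts too quickly. Fix $A_2$. Genus-$4$ curves with a non-constant map to $A_2$ form a family of dimension $\le 4$: use the same genus bound if the map is birational onto its image, and otherwise count the at most two branch points over a genus-$2$ image. Translations give fibers of dimension $\ge 2$ over $\mathcal M_4$, so only a $\le 2$-dimensional family of $A_1$'s can be reached. Your infinitesimal route would instead need $\mathrm{rank}(V_1\otimes V_2\to H^0(2K))=4$. This fails for fiber products with five shared branch points, so it too needs extra work.

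For the upper bound, your $(\mathbb Z/2)^2$-cover idea is the right one, but no Prym or polarization search and no dominance computation is required. Write $A_i=J(C_i)$ with $g(C_i)=2$. Use $\mathrm{PGL}_2$ to make three branch points of the two hyperelliptic maps coincide, and take the normalized fiber product $C_1\times_{\mathbb P^1}C_2$. Its third intermediate quotient is branched at the six remaining points, so its genus is $2+2+2=6$. It maps non-trivially to both $A_i$, for every such pair.
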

It is straightforward to obtain the upper bound $\covgen(A_1,A_2)\le 6$. Indeed, choose genus $2$ curves $C_1\subset A_1$ and $C_2\subset A_2$, and consider their hyperelliptic maps $f_i:C_i\to \mathbb P^1$ given by the $g^1_2$. Form the fiber product $C:=C_1\times_{\mathbb P^1} C_2$ after imposing that three of the six branch points coincide. A direct computation shows that the resulting curve has genus $6$, and it maps non-trivially to both $A_1$ and $A_2$.

The lower bound is more subtle and follows a classical, variational argument. The key point is a careful analysis of the differential of the Torelli map along a family of genus $5$ curves whose Jacobians split (up to isogeny) as a product of two abelian surfaces and an elliptic curve. Controlling the rank of the relevant multiplication maps---and describing the geometric consequences when this rank drops---shows that the corresponding locus has dimension at most $5$. It follows that, for very general $A_1$ and $A_2$, one must have $\covgen(A_1,A_2)\ge 6$. 

In other words, we obtain the following intermediate statement, which may be of independent interest.

\begin{thmIntr}
The Torelli locus in $\mathcal A_5$ intersects $\mathcal A_2\times \mathcal A_2\times \mathcal A_1$ and its Hecke translates along subschemes of dimension at most $5$.
\end{thmIntr}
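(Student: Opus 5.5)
Let $Z\subset \mathcal M_5$ be an irreducible component of the locus of curves $C$ whose Jacobian is isogenous to $B_1\times B_2\times E$, with $B_1,B_2$ abelian surfaces and $E$ an elliptic curve, for a fixed isogeny type; this is the preimage under the Torelli map of a Hecke translate of $\mathcal A_2\times\mathcal A_2\times\mathcal A_1$. We want $\dim Z\le 5$. The approach is infinitesimal. At a general point $[C]\in Z$, the isogeny splitting gives a decomposition
\[
H^0(C,K_C)=V_1\oplus V_2\oplus V_3,\qquad \dim V_1=\dim V_2=2,\ \dim V_3=1,
\]
which is orthogonal for the polarization. The tangent space $T_{[C]}Z\subset H^1(C,T_C)$ consists of first-order deformations $\xi$ that preserve the splitting. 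Equivalently, the symmetric map $\xi\cdot:H^0(K_C)\to H^1(\mathcal O_C)\cong H^0(K_C)^\vee$ is block diagonal. Via Serre duality the codifferential of the period map is the multiplication map
\[
\mu:\Sym^2H^0(K_C)\to H^0(2K_C),
\]
so $T_{[C]}Z$ is the annihilator of $\mu(W)$, where
\[
W=(V_1\otimes V_2)\oplus(V_1\otimes V_3)\oplus(V_2\otimes V_3)
\]
is the off-diagonal part, of dimension $4+2+2=8$. Since $h^0(2K_C)=12=\dim\mathcal M_5$, we get
\[
\dim Z\le 12-\operatorname{rank}(\mu|_W).
\]
It therefore suffices to show $\operatorname{rank}\mu|_W\ge 7$ at a general point of $Z$ (more precisely, at every point, or at the generic point).

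The main step is to bound the kernel of $\mu|_W$. An element of $\ker\mu|_W$ is a quadric of the form $\sum$ (products across blocks) containing the canonical curve $C\subset\mathbb P^4$. For $C$ non-hyperelliptic and non-trigonal, the canonical curve is cut out by exactly three quadrics (the $I_2$ has dimension $15-12=3$). So $\dim\ker\mu|_W\le 3$ always, which only yields $\operatorname{rank}\ge 5$ and $\dim Z\le 7$. Hence we must show that at most one quadric of $I_2(C)$ lies in $W$.

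The plan is to analyze the pencil or net $I_2(C)\cap W$ geometrically.
\begin{itemize}
\item In coordinates $x_1,x_2\in V_1$, $y_1,y_2\in V_2$, $z\in V_3$, quadrics in $W$ have the form $q=\sum a_{ij}x_iy_j+z(\ell_1(x)+\ell_2(y))$.
\item They all vanish on the planes $\{x=0\}$ and $\{y=0\}$, and on the line $\{x=y=0\}$ at the point $p_z$.
\item If $\dim(I_2\cap W)\ge 2$, the base locus of two such quadrics contains $C$ together with the two extra components coming from $\Pi_x=\{y=0\}$ and $\Pi_y=\{x=0\}$, which forces degree and rank constraints.
\end{itemize}
Comparing degrees (the complete intersection of three quadrics has degree $8=\deg C$, so any extra common component is impossible when all of $I_2$ lies in $W$), and computing the residual intersection of a pencil in $W$ with these planes, I expect to show that $C$ must be special. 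The possibilities are:
\begin{itemize}
\item $C$ is trigonal, with a scroll structure compatible with the splitting;
\item $C$ admits a map to an elliptic curve or a genus-$2$ curve, i.e.\ a bielliptic or double-cover structure induced by the projections $x=0$ or $y=0$.
\end{itemize}

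These special loci are then handled separately by dimension counts. The hyperelliptic and trigonal loci need their own rank computations, with $I_2$ of dimension $6$ or $3$ and a scroll; there one uses the Gaussian/second fundamental form description of $\ker\mu$. For the double-cover cases, $C\to D$ with $g(D)=2$ and a Prym-type splitting, the dimension is bounded directly:
\begin{itemize}
\item double covers of genus $2$ curves form a family of dimension $3+\text{(branch points: } 2\text{)}=5$;
\item imposing an additional splitting of the Prym costs further conditions;
\item bielliptic genus $5$ curves form a family of dimension $1+8=9$, but the splitting of the Prym (a $4$-dimensional abelian variety) into $A_2\times$ curve conditions cuts this down, and can be handled by recursion to the Prym map.
\end{itemize}

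The main obstacle will be the generic case: proving that $I_2(C)\cap W$ is at most one-dimensional, i.e.\ that a pencil of quadrics of the shape $q$ above containing $C$ forces one of the special structures. I would attack this first by using the elliptic map $C\to E$, given by the $z$-coordinate pulled back via $J(C)\to E$, together with the maps $C\to B_i$, to interpret $x$ and $y$ as Gauss-type maps. This would show that a second quadric yields a nonconstant rational function realizing a low-degree pencil on $C$.
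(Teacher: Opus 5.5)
\textbf{There is a genuine gap: the decisive inequality is announced but not proved.} Your reduction is the same as the paper's. Corollary~\ref{rankmb} gives $\dim \le 12-\operatorname{rank}$ of the mixed multiplication map, so one needs rank $\ge 7$. But the proposal stops exactly where the proof has to start: $\dim(I_2(C)\cap W)\le 1$ is only anticipated (``I expect to show'', ``the main obstacle'').

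What your sketch actually yields is weaker. The common base locus of the quadrics in $W$ is not the two planes $\{x=0\}$, $\{y=0\}$; on $\{x=0\}$ such a quadric restricts to $z\,\ell_2(y)$. It is the two lines $\{x=z=0\}$, $\{y=z=0\}$ and the point $p_z$. Hence for non-trigonal $C$ you only get $I_2(C)\not\subset W$, i.e.\ $\operatorname{rank}\mu|_W\ge 6$ and $\dim Z\le 6$. That missing unit is the whole content of the statement: it is the difference between $\covgen(A_1,A_2)=5$ and $6$.

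The special loci are also not handled as written:
\begin{itemize}
\item A genus-$5$ double cover of a genus-$2$ curve has four branch points. These covers form the $7$-dimensional space $\mathcal R_{2,4}$, not a $5$-dimensional family.
\item The bielliptic locus in genus $5$ is $8$-dimensional, not $9$.
\item The ``further conditions'' imposed by splitting the Prym are exactly what must be computed. In the paper this case needs the codifferential of the ramified Prym map $\mathcal R_{2,4}\to\mathcal A_3$, whose off-diagonal part $H^0(\omega_E)\otimes H^0(\Omega^1_{A_1})$ has rank $2$, giving $7-2=5$.
\item The trigonal and hyperelliptic cases are left open.
\end{itemize}

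\textbf{The missing idea is to use the maps $\pi_i:C\to A_i$ and the dimension of the family, not only the canonical curve.} The paper proceeds as follows.
\begin{itemize}
\item It writes $V_i=\pi_i^*H^0(\Omega^1_{A_i})$, so the fixed part of the pencil $|V_i|$ is the ramification divisor $Z(d\pi_i)$.
\item It splits $\operatorname{Im}(m)=\operatorname{Im}(m_{1,2})+\operatorname{Im}(m_E)$. Here $m_E$ is automatically injective, being multiplication by the single form $\omega_E$. So rank $\ge 7$ reduces to $\operatorname{rank}(m_{1,2})=4$ together with $\dim(\operatorname{Im} m_{1,2}\cap \operatorname{Im} m_E)\le 1$.
\item Rank $3$ for $m_{1,2}$ means the two pencils coincide after removing fixed parts. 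Lemma~\ref{prelemma} says a family of curves on an abelian surface has dimension at most the genus, and at most the genus minus one if the map is not an immersion. Combined with the dimension of the family including translations, this forces both $\pi_i$ to be non-birational and to factor through genus-$2$ curves with their $g^1_2$. Then $C$ dominates $C_1\times_{\mathbb P^1}C_2$ with at least four common branch points, which is a non-dominant condition on $\mathcal M_2\times\mathcal M_2$.
\item The intersection bound is a Clifford argument when both $\pi_i$ are birational, and the Prym computation above when one of them is a double cover.
\end{itemize}

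These exclusions are not pointwise facts about canonical curves. Fiber products of two genus-$2$ curves with four common branch points are genus-$5$ curves in your locus with $I_2(C)\cap(V_1\otimes V_2)\neq 0$. Such configurations are ruled out only by dimension counts on the family. A pointwise analysis of $I_2(C)\cap W$ therefore has to be paired with exactly those counts, and that is the part your proposal does not supply.
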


The expected dimension of such an intersection is $4$, but it is not difficult to see that $5$-dimensional components do occur.
For instance, the genus $6$ curve constructed above inside the product of two very general abelian surfaces $A_1\times A_2$ degenerates to a genus $5$ curve when $(A_1,A_2)$ varies in suitable codimension-$1$ loci of $\mathcal A_2\times \mathcal A_2$.

\textbf{Acknowledgements.} The problem studied in this paper was suggested to us by Rob Lazarsfeld. We are grateful to Rob Lazarsfeld and Olivier Martin for several illuminating and helpful discussions on related questions.
\section{Covering genus of a very general abelian surface and a $K3$ surface}

We begin by constructing a family of genus $3$ curves dominating a very general $K3$ surface $S$ and a very general abelian surface $A$. The reader may compare the construction below with \cite[Proposition~1.14]{RobOli}, as they are similar in nature.

\begin{ex}
Let $S$ and $A$ be a very general $K3$ surface and a very general abelian surface, respectively. Then
\[
\covgen(S,A)\le 3.
\]

We now construct such a family. A very general $(1,2)$-polarized abelian surface carries a non-isotrivial pencil of genus $3$ curves induced by its polarization. The pencil is non-isotrivial because it contains a nodal member of geometric genus $2$ (for instance, arising from a degree-$2$ isogeny from a principally polarized abelian surface). Moreover, any abelian surface admits an isogeny from a $(1,2)$-polarized abelian surface. Therefore, for a very general abelian surface $A$, we can construct a family $\mathcal C \to \mathbb P^1$ of genus $3$ curves together with a map $\mathcal C \to A$.

For general $t$, the curve $C_t$ admits a non-trivial map onto an elliptic curve
\[
E_t=\mathrm{Jac}(C_t)/A.
\]
By Torelli, the elliptic curve $E_t$ varies in moduli: indeed, $\mathrm{Jac}(C_t)$ is isogenous to $A\times E_t$, and the curves $C_t$ vary in moduli.

Now let $\mathcal E \to B$ be a non-isotrivial family of elliptic curves covering $S$; such a family exists by \cite[Corollary, p.~2]{gounelas}. Using relative Hilbert schemes, we obtain countably many quasi-projective varieties $\mathcal B_k$ parametrizing triples $(t,b,f_b)$, where $t\in \mathbb P^1$, $b\in B$, and $f_b:C_t \to E_b$ is a morphism.
Since the map $\bigcup_k \mathcal B_k\to \mathbb P^1 \times B$ is surjective (both families of elliptic curves vary maximally in moduli), we can choose an irreducible component $\mathcal B_0$ dominating both $\mathbb P^1$ and $B$. The corresponding universal family $\mathcal X\to \mathcal B_0$ then admits a natural map to $A\times S$ whose image dominates both factors, and by construction it is covered by curves of genus $3$.
\end{ex}

We devote this section to the proof of \autoref{SA}.

\begin{lem}\label{stime}\label{prelemma}
Let $S$ be either an abelian surface or a $K3$ surface, and let $\mathcal C\to B$ be a family of curves on $S$ such that the induced map $B\to \mathrm{Hilb}_S$ is finite onto its image (with $B$ reduced and irreducible). For $b\in B$ general, let $\widetilde C_b$ be the normalization of $C_b$. Then
\[
g(\widetilde C_b)\ge \dim(B).
\]
Moreover, if for general $b$ the map $\pi_b:\widetilde C_b\to S$ is not a local immersion, then
\[
g(\widetilde C_b)\ge \dim(B)+1.
\]
\end{lem}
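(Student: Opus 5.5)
The plan is the standard deformation-theoretic argument for curves on surfaces with trivial canonical bundle. Write $\pi=\pi_b:\widetilde C_b\to S$ for a general $b$, $C=\widetilde C_b$, and $g=g(C)$. Since $B\to\mathrm{Hilb}_S$ is finite onto its image, the family gives a $\dim(B)$-dimensional family of genuinely distinct image curves. Hence the space of first-order deformations of the map $\pi$ modulo reparametrizations of $C$ has dimension at least $\dim(B)$. Concretely, I would pass to an étale neighbourhood of a general point of $B$ and normalize the family to get $\widetilde{\mathcal C}\to B$ with a map to $S$. The Kodaira--Spencer-type derivative then gives an injection
\[
T_bB\hookrightarrow H^0(C,N_\pi),\qquad N_\pi:=\operatorname{coker}\bigl(T_C\to \pi^*T_S\bigr).
\]
Injectivity holds because a tangent vector killed here would move the image curve trivially to first order. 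Finiteness of the map to $\mathrm{Hilb}_S$ together with generality of $b$ (generic smoothness in characteristic $0$) rules this out.

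Next I compute $N_\pi$. Let $Z$ be the ramification divisor of $\pi$, i.e. the vanishing divisor of $d\pi:T_C\to\pi^*T_S$, so $d\pi$ factors through $T_C(Z)$, which saturates inside $\pi^*T_S$. Then $N_\pi$ is the direct sum of the torsion sheaf $\mathcal O_Z$-type part and the line bundle
\[
N'_\pi=\pi^*T_S/T_C(Z)\cong \det(\pi^*T_S)\otimes T_C(Z)^{-1}=K_C(-Z),
\]
using $K_S\cong\mathcal O_S$. By Arbarello--Cornalba's lemma (Horikawa), for a general point of a family of maps which genuinely moves the image, first-order deformations land in the torsion-free part. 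That is, the image of $T_bB$ meets $H^0(\text{torsion})$ trivially, so we get an injection
\[
T_bB\hookrightarrow H^0(C,K_C(-Z)).
\]

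Now conclude. Since $h^0(K_C(-Z))\le h^0(K_C)=g$, we get $\dim B\le g$, which is the first inequality. If $\pi$ is not a local immersion, then $Z\ne 0$, and removing a point drops $h^0(K_C)$ by at most one. That alone gives only $\le g$, so the extra argument needed is the following. If $Z\neq 0$ and $K_C(-Z)$ still had $g$ sections, then $\deg Z=1$ and the point $p=Z$ would be a base point of $K_C$, impossible for $g\ge1$. For $g=0$ there are no rational curves moving on an abelian surface, and on a K3 surface rational curves are rigid. Hence $h^0(K_C(-Z))\le g-1$, and $\dim B\le g-1$.

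The main obstacle is the step discarding the torsion part of $N_\pi$. This is the Arbarello--Cornalba argument: for a generic member of a family, the deformations coming from the family have no component supported at the ramification points. I would cite it rather than reprove it. One should also check the small cases $g\le1$ separately: elliptic curves on a K3 or abelian surface move in at most one-dimensional families, and they are unramified when they do move, so the statements hold there too.
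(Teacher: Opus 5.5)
Your argument is correct, but it is the normal-sheaf route of Harris--Morrison (Chap.~3, Sec.~B), which the paper only cites as an alternative. The paper's own sketch pulls back the holomorphic $2$-form $\sigma$ of $S$ to the normalized total space $F:\widetilde{\mathcal C}\to S$. It then contracts $\sigma$ with a lift $\tilde v$ of $v\in T_bB$, producing the $1$-form $\sigma(dF(\tilde v),d\pi_b(\cdot))$ on $\widetilde C_b$. This form visibly vanishes along $Z(d\pi_b)$, which gives $T_bB\otimes H^0(\omega_S)\to H^0(\omega_{\widetilde C_b}(-Z))$ without ever splitting off the torsion of $N_\pi$.

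The two constructions agree. Contraction with $\sigma$ kills $T_C(Z)$ by antisymmetry and is exactly your isomorphism $\pi^*T_S/T_C(Z)\cong K_C(-Z)$. So the paper's map is your composite $T_bB\to H^0(N_\pi)\to H^0(N'_\pi)$ written without normal sheaves. The contraction formulation adapts to other targets carrying holomorphic $2$-forms. Yours makes explicit where $K_S\cong\mathcal O_S$ enters.

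The step you single out as the main obstacle does not need Arbarello--Cornalba under the finiteness hypothesis. The natural map $N_\pi\to\pi^*N_{C_b/S}$ is obtained by differentiating a local equation of the image. Its target is torsion-free, so it factors through $N'_\pi$, and the pullback $H^0(N_{C_b/S})\to H^0(\pi^*N_{C_b/S})$ is injective. Hence a $v$ whose image lies in $H^0$ of the torsion moves $C_b$ trivially in $\mathrm{Hilb}_S$, and your own generic-smoothness argument already gives $v=0$.

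There are two small slips. First, the ``$\deg Z=1$'' step is a non sequitur. All you need is that $|K_C|$ is base-point free for $g\ge 1$, so $h^0(K_C(-Z))\le h^0(K_C(-p))=g-1$ for any $p\in\operatorname{Supp}Z$. Second, for $g=0$ rigidity gives only $\dim B=0$, not $g\ge\dim B+1$. In fact the second inequality is false in that case: take a cuspidal rational curve on a $K3$ surface, with $B$ a point. The paper's sketch also overlooks this degenerate case, and it plays no role in the applications, where $g\ge 2$.
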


\begin{proof}
We give a sketch using a simple idea from \cite{families}; for a different proof see \cite[Chap.~3, Sec.~B]{def}. There is a canonical contraction map
\[
T_bB\otimes H^0(\omega_S)\to H^0(\omega_{C_b}),
\]
obtained by pulling back a holomorphic $2$-form on $S$ to a normalization of the family and contracting against a normal direction (note that $T_bB=H^0(\mathcal N_{C_b|\mathcal C})$; see \cite[Set-up~2.4]{families} for details). For $b$ general, this map is injective and factors through
\[
H^0\!\left(\omega_{\widetilde C_b}(-Z(d\pi_b))\right)\hookrightarrow H^0(\omega_{\widetilde C_b}),
\]
where $Z(d\pi_b)$ denotes the ramification divisor of $\pi_b:\widetilde C_b\to S$. The claims follow.
\end{proof}
We can now prove \autoref{SA}.
\begin{proof}
A very general abelian surface is simple, hence it contains no elliptic curves mapping nontrivially to it. Therefore $\covgen(A,S)>1$. It remains to show that $\covgen(S,A)>2$. We argue by contradiction.

First observe that if $\mathcal C \to B$ is a family of genus $2$ curves equipped with dominant rational maps
\[
\varphi: \mathcal C \dashrightarrow A
\qquad\text{and}\qquad
\psi: \mathcal C \dashrightarrow S,
\]
then $\mathcal C \to B$ must be isotrivial (this follows by Torelli since $\mathrm{Jac}( C_b)$ is isogenous to $A$). Moreover, for  general $b\in B$ the map $\varphi_b:C_b \to A$ is birational onto its images, since a very general abelian surface contains neither rational nor elliptic curves. The same holds for $\psi_b:C_b\to S$: indeed, $C_b$ cannot admit a non-constant map to an elliptic curve (otherwise $\mathrm{Jac}(C_b)$, and hence $A$, would contain an elliptic curve), and $S$ cannot be covered by rational curves.

Consider
\[
M_g^{\mathrm{bir}}(S):=\{[C] \in \mathcal M_g \mid \exists\, f: C \to S \text{ birational onto its image}\}.
\]
Then the locus
\[
\{[A] \in \mathcal A_{(1,d)} \mid \exists\, [C]\in \mathcal M_2 \text{ admitting maps } C \to A,\ C \to S \text{ birational onto their images}\}
\]
coincides with
\[
\{[A]\in \mathcal A_{(1,d)} \mid A \text{ is isogenous to some } [A']\in T_2(M_2^{\mathrm{bir}}(S))\},
\]
where $T_2: \mathcal M_2 \to \mathcal A_{(1,1)}$ is the Torelli map.

This last locus is contained in a countable union of proper Zariski-closed subsets. Indeed, by \autoref{stime}, the locus $M_2^{\mathrm{bir}}(S)$ is contained in a countable union of Zariski-closed subsets of dimension $\le 2$, hence proper. This finishes the proof.
\end{proof}

\section{Genus of curves on distinct abelian surfaces}
We divide the proof of \autoref{A12} into several steps. Since the statement is invariant under isogeny, we may assume without loss of generality that the abelian surfaces are principally polarized. We denote by $\mathcal A_g$ the moduli space of principally polarized abelian varieties of dimension $g$.

Before turning to the proof, we record the following elementary observation.

\begin{lem}\label{lem:surj-jac}
Suppose that $A_1$ and $A_2$ are simple and not isogenous. Then, for any curve $C\to A_1\times A_2$ with non-trivial projections to both factors, the induced morphism
\[
\mathrm{Jac}(C)\longrightarrow A_1\times A_2
\]
is surjective. In particular,
\[
H^0(C,\omega_C)=H^0(\Omega^1_{A_1})\oplus H^0(\Omega^1_{A_2})\oplus H^0\!\left(\Omega^1_{\ker(\mathrm{Jac}(C)\to A_1\times A_2)}\right).
\]
\end{lem}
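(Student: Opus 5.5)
The plan is to use the universal property of the Jacobian together with the structure of abelian subvarieties of a product of non-isogenous simple factors. Fix a base point $c_0\in C$ and translate so that $f=(f_1,f_2)\colon C\to A_1\times A_2$ sends $c_0$ to the origin. By the Albanese property, $f$ factors as $C\to \mathrm{Jac}(C)\xrightarrow{\phi} A_1\times A_2$ with $\phi$ a homomorphism. The image $B:=\phi(\mathrm{Jac}(C))$ is an abelian subvariety, and it is the subgroup generated by $f(C)$. We must show $B=A_1\times A_2$.

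First look at the projections. Since $f_i$ is non-constant, $p_i(B)\subset A_i$ is a non-zero abelian subvariety. Because $A_i$ is simple, $p_i(B)=A_i$. Next consider $B\cap (A_1\times 0)$, whose connected component $B_1$ is an abelian subvariety of $A_1$, hence either $0$ or $A_1$.

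If $B_1=0$, then $p_2|_B\colon B\to A_2$ has finite kernel and is surjective, so it is an isogeny $B\sim A_2$. Then $p_1|_B\colon B\to A_1$ is surjective, which gives a surjection $A_2\to A_1$ up to isogeny. Both surfaces are simple of the same dimension, so this is an isogeny $A_1\sim A_2$, a contradiction. Hence $B\supset A_1\times 0$, and symmetrically $B\supset 0\times A_2$, so $B=A_1\times A_2$. This is the only real step, and it amounts to a Goursat-type argument. (One could equally use Poincaré reducibility together with $\mathrm{Hom}(A_1,A_2)=0$: any abelian subvariety of $A_1\times A_2$ is of the form $B_1\times B_2$.)

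For the decomposition of $H^0(C,\omega_C)$, apply Poincaré complete reducibility to the surjection $\phi$. There is an abelian subvariety $P\subset\mathrm{Jac}(C)$ such that $\phi|_P\colon P\to A_1\times A_2$ is an isogeny, and the sum map $P\times K^0\to\mathrm{Jac}(C)$ is an isogeny, where $K^0$ is the connected component of $\ker\phi$. Isogenies induce isomorphisms on spaces of invariant $1$-forms. Since $H^0(C,\omega_C)\cong H^0(\Omega^1_{\mathrm{Jac}(C)})$, we get
\[
H^0(C,\omega_C)\cong H^0(\Omega^1_{A_1})\oplus H^0(\Omega^1_{A_2})\oplus H^0(\Omega^1_{K^0}).
\]
Here the first two summands embed via pullback $\phi^*$, which is injective because $\phi$ is surjective. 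The last summand is identified with the complement as the forms coming from the kernel, in the sense of the statement. No step is a serious obstacle; the only care needed is the case analysis above showing that the image cannot be a graph of an isogeny.
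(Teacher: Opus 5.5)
Your proof is correct, but you reach surjectivity by a different route than the paper.

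The paper argues by contradiction on $\dim B\in\{1,2,3\}$:
\begin{itemize}
\item dimension $1$ would produce an elliptic curve in some $A_i$;
\item dimension $2$ would make both projections isogenies, forcing $A_1\sim A_2$;
\item dimension $3$ would give an elliptic quotient $(A_1\times A_2)/B$, which by Poincar\'e reducibility (and uniqueness of the isogeny decomposition) would have to be an isogeny factor of $A_1$ or $A_2$.
\end{itemize}

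You instead run a Goursat-type argument. After noting $p_i(B)=A_i$, you look at the identity component of $B\cap(A_1\times 0)$, and symmetrically of $B\cap(0\times A_2)$. If either were trivial, the corresponding projection restricted to $B$ would be an isogeny. Composing with the other surjective projection would then give $A_1\sim A_2$.

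Your version has three advantages:
\begin{itemize}
\item it is uniform, with no case split on the dimension of $B$;
\item it does not use uniqueness of the decomposition into simple factors;
\item it works verbatim for any two simple non-isogenous abelian varieties of arbitrary dimension, if you replace your ``same dimension'' remark by the simplicity of the source of the surjection.
\end{itemize}
The paper's case analysis is tailored to surfaces, where it is equally short.

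For the decomposition of $H^0(C,\omega_C)$ the two arguments coincide; the paper simply says it ``follows''. Your explicit choice of a Poincar\'e complement $P$ is the right way to give meaning to the third summand. That summand is canonically a quotient of $H^0(\Omega^1_{\mathrm{Jac}(C)})$: it is restriction to $K^0$, with kernel $\phi^*H^0(\Omega^1_{A_1\times A_2})$. It becomes a subspace only once such a splitting is chosen.
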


\begin{proof}
Let $B\subset A_1\times A_2$ be the image of $\mathrm{Jac}(C)\to A_1\times A_2$. Then $B$ is an abelian subvariety. We argue that $\dim B=4$.

Assume by contradiction that $\dim B<4$ and consider the possible values.

\begin{itemize}
\item If $\dim B=1$, then the image of $B$ in at least one factor $A_i$ is an elliptic curve, contradicting the simplicity of $A_i$.

\item If $\dim B=2$, then both projections $B\to A_1$ and $B\to A_2$ are non-trivial, hence surjective by simplicity. It follows that $A_1$ and $A_2$ are both isogenous to $B$, and therefore isogenous to each other, a contradiction.

\item If $\dim B=3$, then the quotient $(A_1\times A_2)/B$ is an elliptic curve, so $A_1\times A_2$ is isogenous to $B\times E$ for some elliptic curve $E$. By Poincar\'e complete reducibility, $E$ appears as an isogeny factor of $A_1\times A_2$, hence of $A_1$ or $A_2$, again contradicting simplicity.
\end{itemize}

Therefore $\dim B=4$ and $\mathrm{Jac}(C)\to A_1\times A_2$ is surjective. The decomposition of $H^0(C,\omega_C)=H^0(\Omega^1_{\mathrm{Jac}(C)})$ follows.
\end{proof}

An immediate consequence is the following.

\begin{cor}\label{cor:iso}
If $\covgen(A_1,A_2)=g$ for very general $A_1,A_2\in \mathcal A_2$, then there exists a $6$-dimensional subvariety $B\subset \mathcal M_g$ whose general point $[C]\in B$ satisfies that $\mathrm{Jac}(C)$ contains the product of two abelian surfaces as an isogeny factor.
\end{cor}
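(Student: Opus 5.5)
The plan is a dimension count on the family of genus $g$ curves covering $A_1\times A_2$ as the pair varies, using \autoref{lem:surj-jac} to translate "maps to $A_1\times A_2$" into "Jacobian has $A_1\times A_2$ as an isogeny factor". Fix $g=\covgen(A_1,A_2)$ for very general $(A_1,A_2)\in\mathcal A_2\times\mathcal A_2$. A very general pair consists of simple, non-isogenous surfaces, so \autoref{lem:surj-jac} applies. For each such pair there is a curve $C$ of geometric genus $g$ with a morphism $C\to A_1\times A_2$ that is non-trivial on both factors; we may replace $C$ by its normalization. Then $\mathrm{Jac}(C)\to A_1\times A_2$ is surjective, and Poincaré reducibility gives that $\mathrm{Jac}(C)$ is isogenous to $A_1\times A_2\times K$, with $K$ the connected component of the kernel.

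Next I spread this out. The data $(A_1,A_2,C,f\colon C\to A_1\times A_2)$ are parametrized by countably many quasi-projective varieties: relative Hom or Hilbert schemes over $\mathcal M_g\times\mathcal A_2\times\mathcal A_2$, with isogeny types and polarization degrees as discrete data. Since every very general pair is attained, some component $\mathcal H$ dominates $\mathcal A_2\times\mathcal A_2$, which has dimension $6$. Consider the image $B'$ of $\mathcal H$ in $\mathcal M_g$.

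The key step is to show that $\mathcal H\to\mathcal M_g$ has finite fibers over the relevant locus, so that $\dim B'\ge 6$. Fix $[C]$. Only countably many abelian varieties are quotients of $\mathrm{Jac}(C)$ up to isomorphism, because abelian subvarieties of $\mathrm{Jac}(C)$ are rigid. Hence only countably many pairs $(A_1,A_2)$ can appear over $[C]$. Within a fixed pair, the maps $C\to A_1\times A_2$ are classified up to translation by discrete data, namely homomorphisms $\mathrm{Jac}(C)\to A_1\times A_2$. So, after quotienting by translations, the fiber of $\mathcal H\to\mathcal M_g$ is countable, hence finite on each component. Dominance onto $\mathcal A_2\times\mathcal A_2$ then forces $\dim B'\ge 6$.

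To finish, I take $B$ to be a $6$-dimensional subvariety of $\overline{B'}$ whose image in $\mathcal A_2\times\mathcal A_2$ is still dominant; concretely, cut $\mathcal H$ by general hyperplane sections to get a generically finite dominant component. For a general $[C]\in B$ the Jacobian contains $A_1\times A_2$ as an isogeny factor. I expect the main obstacle to be the countability and finiteness step: making the "rigidity of abelian subvarieties and homomorphisms" argument rigorous, and handling the translation action so that a $6$-dimensional family genuinely lands in $\mathcal M_g$ rather than collapsing.
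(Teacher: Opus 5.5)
Your proposal is correct and follows the same route as the paper. The paper simply invokes \autoref{lem:surj-jac} together with $\dim(\mathcal A_2\times\mathcal A_2)=6$, and leaves implicit the countability step you spell out: a fixed $\mathrm{Jac}(C)$ has only countably many quotients, so the family has finite fibers over $\mathcal M_g$. To handle translations more cleanly, project your family to $\mathcal M_g\times\mathcal A_2\times\mathcal A_2$; its image has countable, hence finite, fibers over $\mathcal M_g$ on the open locus where $\mathrm{Jac}(C)\to A_1\times A_2$ is surjective.
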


\begin{proof}
This follows from \autoref{lem:surj-jac} and the fact that
\[
\dim(\mathcal A_2\times \mathcal A_2)=\dim(\mathcal A_2)+\dim(\mathcal A_2)=3+3=6.
\]
\end{proof}

\subsection{Curves of genus $\le 4$ on the product of two abelian surfaces}

We begin by ruling out the existence of curves of genus $\le 4$ mapping nontrivially to the product of two very general abelian surfaces.

\begin{prop}\label{prop:covgen-ge5}
Under the hypotheses of \autoref{A12}, one has $\covgen(A_1,A_2)\ge 5$.
\end{prop}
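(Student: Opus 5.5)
We argue by a dimension count, using \autoref{lem:surj-jac} and \autoref{cor:iso}. Suppose that $\covgen(A_1,A_2)=g\le 4$ for very general $A_1,A_2\in\mathcal A_2$. A very general abelian surface is simple, and $A_1,A_2$ are not isogenous. By \autoref{lem:surj-jac}, any curve $C\to A_1\times A_2$ with non-trivial projections has $\mathrm{Jac}(C)\to A_1\times A_2$ surjective. Hence $g(C)\ge 4$, which immediately rules out $g\le 3$.

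It remains to treat $g=4$. In this case $\mathrm{Jac}(C)$ is isogenous to $A_1\times A_2$. The isogeny class of $\mathrm{Jac}(C)$ determines the unordered pair of isogeny classes of $A_1,A_2$. Each isogeny class in $\mathcal A_2$ is countable, since it is a union of Hecke orbits.

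Now fix a countable family of irreducible subvarieties $B_k\subset\mathcal M_4$ that parametrize curves admitting a map to some product of abelian surfaces. These come from relative Hilbert schemes of maps $C\to A_1\times A_2$ over $\mathcal A_2\times\mathcal A_2$ (after choosing polarization data), so there are only countably many. For very general $(A_1,A_2)$ some $B_k$ must contain a curve mapping to $A_1\times A_2$. Consider the map $B_k\to(\mathcal A_2\times\mathcal A_2)/\text{isogeny}$. It has countable fibres, and its image must be dense, so the image has dimension $6$ and $\dim B_k\ge 6$.

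Conversely, the Torelli map is injective on points, and $\mathrm{Jac}(C)$ is isogenous to a product of two abelian surfaces. So $T(B_k)$ lies in a countable union of Hecke translates of $\mathcal A_2\times\mathcal A_2\subset\mathcal A_4$. By \autoref{cor:iso} this would force the Torelli locus $T(\mathcal M_4)$, which has dimension $9$, to contain a $6$-dimensional subvariety of some Hecke translate of $\mathcal A_2\times\mathcal A_2$. Hecke translates of $\mathcal A_2\times\mathcal A_2$ are again $6$-dimensional and irreducible. So the closure of $T(B_k)$ would equal a whole Hecke translate $h(\mathcal A_2\times\mathcal A_2)$, and then $\overline{T(\mathcal M_4)}=\mathcal A_4$ would contain it.

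The main obstacle is to rule this out. We want to show that a general point of $h(\mathcal A_2\times\mathcal A_2)$ is not a Jacobian, or equivalently that the Schottky divisor $J_4\subset\mathcal A_4$ does not contain any Hecke translate of $\mathcal A_2\times\mathcal A_2$. Since $J_4$ is irreducible of dimension $9$, its codimension is $1$, so a naive count is inconclusive.

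I would argue infinitesimally instead. At a Jacobian $\mathrm{Jac}(C)$ isogenous to $A_1\times A_2$, we have the splitting $H^0(\omega_C)=V_1\oplus V_2$ with $\dim V_i=2$. The tangent space to the isogenous product locus is $\Sym^2V_1^\vee\oplus\Sym^2V_2^\vee$, of dimension $6$. The tangent to $T(\mathcal M_4)$ is $(\Sym^2 H^0(\omega_C)/I_2(C))^\vee$, where $I_2(C)$ is $1$-dimensional and spanned by a quadric $Q$ (the genus $4$ curve lies on a quadric). For the product locus to be contained in the Torelli image, $Q$ must annihilate $\Sym^2V_1^\vee\oplus\Sym^2V_2^\vee$. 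This means $Q\in V_1\cdot V_2$, so $Q$ has the shape $\sum\ell_i m_j$ with $\ell_i\in V_1$, $m_j\in V_2$.

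Such a quadric $\sum_{i,j}a_{ij}x_iy_j$ in $\mathbb P^3=\mathbb P(V_1^\vee\oplus V_2^\vee)$ has rank $\le 4$ with isotropic lines $\mathbb P(V_1^\vee)$ and $\mathbb P(V_2^\vee)$. I would then analyse the geometry. If the quadric is smooth, these are two lines of the quadric. The two projections $C\to\mathbb P^1$, obtained by projecting from the lines, would then be the two $g^1_3$'s. These factor through the canonical forms in $V_1$ and $V_2$ respectively, forcing $\mathrm{Jac}(C)$ to map to the Prym-type pieces compatibly.

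I would aim to derive a contradiction by checking that the composed differential $T_C\mathcal M_4\to T(\mathcal A_2\times\mathcal A_2)$ cannot be surjective. Concretely, the multiplication map $\mu\colon\Sym^2V_1\oplus\Sym^2V_2\to H^0(\omega_C^2)$ has a kernel only if a quadric lies in $\Sym^2V_i$, which is impossible since a rank-$\le 2$ quadric cannot contain the canonical curve. The required surjectivity therefore reduces to a statement about $H^0(\omega_C^2)$ and $\ker\mu$. I expect this rank computation, together with handling the degenerate (cone) case, to be the heart of the proof. This fits the paper's own description of the method: controlling ranks of multiplication maps along the isogenous-product locus.
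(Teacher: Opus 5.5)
\textbf{There is a genuine gap: the key step is left unproved, and the method you sketch for it cannot work.} Your treatment of $g\le 3$ via \autoref{lem:surj-jac} is fine. So is the reduction of $g=4$ to showing that no $6$-dimensional isogenous-product locus lies in the Jacobian locus of $\mathcal A_4$. But that statement is the entire content of the proposition, and you do not prove it.

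The infinitesimal route cannot give a contradiction. At a non-hyperelliptic $[C]$, the only first-order constraint is the one you found, $Q\in V_1\otimes V_2$, and it does not restrict $C$ at all.
\begin{itemize}
\item Since $\mathbb P(V_1^\vee)$ and $\mathbb P(V_2^\vee)$ are disjoint lines on $Q$, the quadric is smooth (its rank is $2\,\mathrm{rank}(a_{ij})$) and both lines lie in the same ruling. Conversely, for every genus $4$ curve with smooth quadric, any two disjoint lines of one ruling give a splitting $H^0(K_C)=V_1\oplus V_2$ with $Q\in V_1\otimes V_2$.
\item In particular, the two pencils are not the two $g^1_3$'s. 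They share the same moving $g^1_3$ and have degree-$3$ base divisors $D_1,D_2$ lying in the other one. There is also no cone case.
\item Precisely because $Q\in V_1\otimes V_2$, the kernel $I_2(C)\cap(\Sym^2V_1\oplus\Sym^2V_2)$ of $\mu$ is zero. Note that this kernel is not controlled by quadrics in a single $\Sym^2V_i$, as you claimed. So the dual map $T_{[C]}\mathcal M_4\to \Sym^2V_1^\vee\oplus\Sym^2V_2^\vee$ is surjective, which is the opposite of what you need.
\end{itemize}
No rank computation at a single curve can therefore conclude. You also do not address the hyperelliptic case: a $6$-dimensional family could sit inside the $7$-dimensional hyperelliptic locus, where your quadric and Torelli-immersion set-up is unavailable.

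\textbf{The missing ingredient is global:} $V_i$ consists of forms pulled back along a map $C\to A_i$, and this bounds the number of moduli of such curves. The paper uses exactly this and never passes through $\mathcal A_4$. Fix $A_2$, and take a genus $4$ curve with a non-constant map to $A_2$.
\begin{itemize}
\item If the map is birational onto its image, \autoref{prelemma} bounds the family in $\mathrm{Hilb}_{A_2}$ by dimension $4$.
\item Otherwise, by Hurwitz, the curve is a double cover of a genus $2$ curve branched at two points, or an \'etale triple cover. Again this gives at most $4$ parameters.
\end{itemize}
Two of these parameters are translations, so such curves have at most $2$ moduli. Since $A_1$ is one of countably many isogeny factors of $\mathrm{Jac}(C)$, the surfaces $A_1$ that occur lie in a countable union of loci of dimension at most $2$ in the $3$-dimensional $\mathcal A_2$. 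Your first-order analysis could be finished with this kind of count (e.g.\ $D_i$ must be the ramification divisor of $C\to A_i$), but at that point the infinitesimal detour is unnecessary.
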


\begin{proof}
The cases $g\le 3$ follow, for instance, from \autoref{cor:iso}.

We now treat genus $4$. Let $\mathcal H\subset \mathcal M_4$ be the locus parametrizing genus $4$ curves that admit a non-constant map to $A_2$; it is a countable union of closed subsets. Let $B\subseteq \mathcal H$ be an irreducible component, and let
\[
p:W\longrightarrow B
\]
be the induced family, endowed with a dominant map $W\to A_2$ restricting fiberwise to maps $W_b\to A_2$.

If the map $W_b\to A_2$ is birational onto its image, then by \autoref{prelemma} we have $\dim(B)\le 4$.
If $W_b\to A_2$ is not birational onto its image, then its image must be a genus $2$ curve $C_2\subset A_2$, and the induced cover
$W_b\to C_2$ can ramify over at most two points; in particular, again $\dim(B)\le 4$ (two parameters for the ramification points and two parameters for translating $C_2$ inside $A_2$).

On the other hand, the fibers of the natural map $B\to \mathcal M_4$ have dimension at least $2$, since one can postcompose a given map $W_b\to A_2$ with translations of $A_2$. It follows that the locus of abelian surfaces $A_1\in \mathcal A_2$ admitting a nontrivial map $W_b\to A_1$ for some $b\in B$ has dimension at most $2$, hence is a proper subset of $\mathcal A_2$.

Therefore, fixing $A_2$ and choosing $A_1$ very general, there is no genus $4$ curve mapping nontrivially to $A_1\times A_2$. This shows that $\covgen(A_1,A_2)\ge 5$.
\end{proof}

To rule out $\mathrm{cov.gen}(A_1,A_2)=5$ for very general $A_1,A_2$, we borrow some ideas from \cite{cvt,cv}. We study the dimension of families of genus $5$ curves whose Jacobian is isogenous to a product of two abelian surfaces and an elliptic curve (possibly varying in moduli). Let $\mathcal A_5$ be the moduli space of principally polarized abelian fivefolds, and let $\mathcal M_5$ be the moduli space of genus $5$ curves, with Torelli map
\[
T_5:\mathcal M_5\to \mathcal A_5.
\]
We would like to bound the dimension of certain subvarieties of $\mathcal M_5$ by controlling their tangent spaces. Since $\mathcal M_5$ and $\mathcal A_5$ are stacks (and their coarse moduli spaces may be singular), one should work on suitable covers (for instance by imposing a level structure) in order to speak unambiguously about tangent spaces. To simplify notation, we will not make this explicit.

It is well known that if $A\in \mathcal A_5$, then there is a canonical identification
\[
T_{[A]}\mathcal A_5 \cong \Sym^2 T_{A,0},
\]
see \cite{cv}, p.~553. Let $A\in \mathcal A_5$ be isogenous to a product $A_1\times A_2\times E$, where $A_1,A_2$ are principally polarized abelian surfaces and $E$ is an elliptic curve (we write $A \approx A_1\times A_2\times E$).

\begin{prop}\label{deform}
The vector subspace of the deformation space $\Sym^2T_{A,0}$ parametrizing deformations of triples $(A,L,A\twoheadrightarrow A_1\times A_2)$, where $L$ is a polarization on $A$, is
\[
V:=\Sym^2T_{A_1,0} \oplus \Sym^2 T_{A_2,0} \oplus \Sym^2 T_{E,0}.
\]
\end{prop}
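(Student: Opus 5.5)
The approach is to reduce to the isogenous product and read off which first-order deformations preserve the splitting. The identification $T_{[A]}\mathcal A_5\cong \Sym^2T_{A,0}$ is compatible with isogenies: an isogeny $\phi:A\to A_1\times A_2\times E$ induces an isomorphism $T_{A,0}\cong T_{A_1,0}\oplus T_{A_2,0}\oplus T_{E,0}$. Deformations of $A$ correspond bijectively to deformations of the target carrying the deformed finite kernel, since finite subgroup schemes deform uniquely (étale). Polarizations pull back and push forward up to multiples, so deforming $(A,L)$ is the same as deforming the polarized product with its induced (non-principal, possibly non-product) polarization. I will therefore work on $P:=A_1\times A_2\times E$ with the induced polarization $L'$. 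Writing $T:=T_{P,0}=T_1\oplus T_2\oplus T_3$, I use the standard description: a first-order deformation of a polarized abelian variety is a symmetric element of $\mathrm{Hom}(T^\vee,T)$, i.e.\ a symmetric map $H^{1,0}\to H^{0,1}$ (the Kodaira--Spencer class, contracted with the polarization).

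The key step is to characterize deformations along which the surjection $A\twoheadrightarrow A_1\times A_2$ persists. By Poincar\'e reducibility, this is equivalent to the persistence of the abelian subvarieties (up to isogeny) $A_1$, $A_2$ and $E$, where $E$ is the connected kernel and the complement $A_1\times A_2$ splits further because $A_1\times A_2$ itself deforms as a product. I will check this along the deformation. Persistence of an abelian subvariety $Y\subset P$ to first order is equivalent to the class $\theta\in \mathrm{Hom}(H^{1,0}(P),H^{0,1}(P))$ mapping $H^{1,0}$-forms vanishing on $Y$ into $(0,1)$-forms vanishing on $Y$. Equivalently, the Hodge class of $Y$ (a rational $(n-1,n-1)$ class, or the idempotent in $\mathrm{End}_{\mathbb Q}$) stays of Hodge type. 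For an endomorphism $e$ (the projector to a factor), it remains Hodge to first order iff $\theta$ commutes with $e$. Imposing commutation with the three projectors $e_1,e_2,e_3$ forces $\theta$ to be block diagonal, $\theta\in \bigoplus_i \mathrm{Hom}(T_i^\vee,T_i)$. Combined with symmetry, this gives $\theta\in\bigoplus_i\Sym^2T_i$, which is $V$.

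Conversely, I must show every element of $V$ is realized. This is clear because $\mathcal A_2\times\mathcal A_2\times\mathcal A_1$ (with the product polarization pulled back to $A$ via the isogeny) is smooth of dimension $3+3+1=7=\dim V$. Its tangent space injects into $V$, so equality holds.

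The main obstacle is the rigorous first step: the polarization $L$ on $A$ need not pull back to a product polarization on $A_1\times A_2\times E$, so ``symmetric'' must be understood with respect to $L'$. I will handle this by noting that the projectors $e_i$ are symmetric for the Rosati involution of $L'$ (after replacing by suitable idempotents, since the factors are orthogonal for any polarization when $A_1,A_2,E$ are pairwise non-isogenous and simple, which holds for the general point of interest). The induced polarization is then the orthogonal sum of polarizations on the factors, and the identification with $\Sym^2$ of each block is canonical.
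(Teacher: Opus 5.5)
The paper gives no proof of this proposition. It states it as the standard description, in the spirit of Ciliberto--van der Geer, where the tangent space to the locus of abelian varieties with prescribed endomorphisms is the invariant part of $\Sym^2 T_{A,0}$, and uses it directly in the bound $\dim(\widetilde B)\le 12-\mathrm{rank}(m_b)$.

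Your argument supplies exactly that standard justification, and it is correct. You transport to $P=A_1\times A_2\times E$ through the isogeny and view a first-order deformation as $\theta\in\mathrm{Hom}(H^{1,0},H^{0,1})$. The triple persists iff the three symmetric idempotents persist, i.e.\ iff $\theta$ commutes with them, so $\theta$ is block diagonal. Symmetry for an orthogonal-sum polarization then gives $V$.

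There are two minor slips:
\begin{itemize}
\item The class of an abelian subvariety of dimension $d$ has type $(n-d,n-d)$, not $(n-1,n-1)$. This is harmless, since you really use the idempotent.
\item For the reverse inclusion, you should deform each factor with its \emph{induced} polarization $L'|_{Y_i}$. This polarization need not be principal, nor the restriction of a product principal polarization. No dimension count is actually needed here: a block-diagonal symmetric $\theta$ is a product of first-order deformations of the polarized factors, and you carry the finite kernel along.
\end{itemize}

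The orthogonality hypothesis you impose is not a weakness of your proof; it is genuinely necessary. The proposition as literally stated, for an arbitrary $A\approx A_1\times A_2\times E$, is false. Take $A=B\times B\times E$ with the product principal polarization, and the surjection $A\twoheadrightarrow B\times B$, $(x,y,z)\mapsto (x-y,y)$.
\begin{itemize}
\item The associated subvarieties are $E$, $B\times 0$ and $\Delta_B$, and the last two are not orthogonal.
\item Preserving $E$, $B\times 0$ and the polarization keeps $A=B_1\times B_2\times E$ as a principally polarized product.
\item Preserving $\Delta_B$ then keeps an isomorphism $B_1\cong B_2$.
\end{itemize}
So the deformation space of this triple is $4$-dimensional, while $\dim V=7$.

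The paper only uses the proposition at points where $A_1,A_2$ are very general, hence simple and non-isogenous. There $\mathrm{Hom}(Y_i,Y_j)=0$ for $i\neq j$, so every polarization is an orthogonal sum. Your restricted version is therefore exactly what the argument needs, and it makes explicit a hypothesis the paper leaves implicit.
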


If $[C]\in \mathcal M_5$, then a neighborhood of $[C]$ in $\mathcal M_5$ can be identified with a neighborhood of $0$ in $H^0(C,K_C^{\otimes 2})^*/\mathrm{Aut}(C)$. The Torelli map induces a morphism on cotangent spaces
\[
dT_5^*: T_{[JC]}^* \mathcal A_5 \to T_{[C]}^*\mathcal M_5,
\]
which, under the standard identifications, is the multiplication map
\[
\mathrm{ev}: \Sym^2 H^0(K_C) \to H^0(K_C^{\otimes 2}).
\]

Let
\[
M:= \Bigl\{ [C]\in \mathcal M_5 \ \Big|\ JC \approx A_1\times A_2 \times E \text{ for some } A_1,A_2 \in \mathcal A_2,\ E\in \mathcal A_1 \Bigr\}\subseteq \mathcal M_5,
\]
and set
\[
W:= \bigl(T_{A_1,0} \otimes T_{A_2,0}\bigr)\ \oplus\ \bigl(T_{A_1,0}\oplus T_{A_2,0}\bigr)\otimes T_{E,0},
\]
so that
\[
T_{[JC]}\mathcal A_5=\Sym^2 T_{A,0}=V\oplus W.
\]

\begin{prop}
Assume the general point $[C]$ of $M$ is non-hyperelliptic. Then the cotangent space $T_{[C]}^*M$ is a quotient of $T_{[C]}^*\mathcal M_5/\mathrm{ev}(W)$.
\end{prop}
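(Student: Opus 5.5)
We argue by duality: the claim is equivalent to the inclusion $T_{[C]}M\subseteq \mathrm{ev}(W)^{\perp}$ inside $T_{[C]}\mathcal M_5=H^0(K_C^{\otimes 2})^*$. Granting this, $T^*_{[C]}M$, being the restriction of linear forms on $T_{[C]}\mathcal M_5$ to the subspace $T_{[C]}M$, is a quotient of $T^*_{[C]}\mathcal M_5$ in which every element of $\mathrm{ev}(W)$ maps to zero, hence a quotient of $T_{[C]}^*\mathcal M_5/\mathrm{ev}(W)$.

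We work near a general point $[C]\in M$, where $M$ is smooth, on a level cover as the paper indicates. Fix a component of $M$ through $[C]$. Along it the isogeny type of $JC$ is rigid locally: the existence of an isogeny $JC\to A_1\times A_2\times E$ of bounded degree is preserved under deformation along $M$, because the relevant Hecke correspondences form countably many closed loci. Consequently $T_5$ maps this component into (a Hecke translate of) the locus of triples $(A,L,A\twoheadrightarrow A_1\times A_2)$. By \autoref{deform}, $dT_5(T_{[C]}M)\subseteq V$. Since $C$ is non-hyperelliptic, infinitesimal Torelli holds: $dT_5$ is injective, equivalently $\mathrm{ev}$ is surjective by Max Noether. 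This lets us identify $T_{[C]}M$ with a subspace of $V$ through $dT_5$.

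Next we dualize. The splitting $\Sym^2T_{A,0}=V\oplus W$ arises from the decomposition $T_{A,0}=T_{A_1,0}\oplus T_{A_2,0}\oplus T_{E,0}$, and it is orthogonal for the natural pairing between $\Sym^2T_{A,0}$ and $\Sym^2 H^0(K_C)=\Sym^2 T^*_{A,0}$. The dual decomposition is $H^0(K_C)=H^0(\Omega^1_{A_1})\oplus H^0(\Omega^1_{A_2})\oplus H^0(\Omega^1_E)$, which is available here by \autoref{lem:surj-jac} together with its analogue for $E$. Under this identification, the cotangent counterpart $W\subset \Sym^2H^0(K_C)$ consists of the mixed products $\omega_i\omega_j$ with $\omega_i,\omega_j$ coming from different factors. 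This subspace annihilates $V$, and pairing is compatible with $dT_5^*=\mathrm{ev}$. Therefore, for $v\in T_{[C]}M$ and $w\in W$,
\[
\langle \mathrm{ev}(w), v\rangle=\langle w, dT_5(v)\rangle=0,
\]
because $dT_5(v)\in V$. This gives $\mathrm{ev}(W)\subseteq T_{[C]}M^{\perp}$, which is the required inclusion.

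The main obstacle is the first step: justifying rigorously that $dT_5(T_{[C]}M)$ lies in $V$, that is, applying \autoref{deform} to the actual tangent space of $M$. Two issues arise. First, the isogeny and the factor $E$ may vary along $M$, so we need to argue that locally along $M$ one fixed Hecke translate of $\mathcal A_2\times\mathcal A_2\times\mathcal A_1$ contains $T_5(M)$; a countability/Baire argument handles this. Second, the identification of $V$ must be transported along the isogeny, which does not change tangent spaces at the origin. I would deal with both by choosing the component and the isogeny degree at the general point, and by passing to a level cover where tangent spaces make sense.
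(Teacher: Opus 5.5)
Your argument is correct and is essentially the paper's. The paper likewise takes $dT_5(T_{[C]}M)\subseteq V$ and observes, through a commutative diagram of cotangent spaces, that the composition $W\hookrightarrow V\oplus W\xrightarrow{\mathrm{pr}_V}V\twoheadrightarrow T^*_{[C]}M$ vanishes, which is exactly the dual of your computation $\langle \mathrm{ev}(w),v\rangle=\langle w,dT_5(v)\rangle=0$; your countability argument for fixing one Hecke translate locally fills in a step the paper simply asserts, though note that the splitting of $H^0(K_C)$ comes directly from the isogeny in the definition of $M$ rather than from \autoref{lem:surj-jac}.
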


\begin{proof}
The differential of the Torelli map sends $T_{[C]}M$ into $V\subset T_{[JC]}\mathcal A_5$. Since $C$ is non-hyperelliptic, the Torelli map is an immersion in a neighborhood of $[C]$. Using the identifications $T_{[C]}^*\mathcal M_5\cong H^0(K_C^{\otimes 2})$ and $T_{[JC]}^*\mathcal A_5 \cong \Sym^2 H^0(K_C)$, we obtain a commutative diagram
\[
\xymatrix{
T_{[C]}^*M & V \ar@{->>}[l] \\
T_{[C]}^*\mathcal M_5 \ar@{->>}[u] & \Sym^2 H^0(K_C) \ar@{->>}[l]^{\mathrm{ev}} \ar@{->>}[u] & V\oplus W \ar[ul]_{\mathrm{pr}_{V}} \ar[l]_{\cong}
}
\]
In particular, the composition $W \hookrightarrow V\oplus W \xrightarrow{\mathrm{pr}_V} V \twoheadrightarrow T_{[C]}^*M$ is zero, hence $\mathrm{ev}(W)$ maps to zero in $T_{[C]}^*M$. Therefore the quotient $T_{[C]}^*\mathcal M_5/\mathrm{ev}(W)$ surjects onto $T_{[C]}^*M$, as claimed.
\end{proof}
We now prove \autoref{A12}. Given a genus $5$ curve $C$ together with an isogeny $\mathrm{Jac}(C)\simeq A_1\times A_2 \times E$ (hence an identification
$H^0(\omega_C)=H^0(\Omega^1_{A_1})\oplus H^0(\Omega^1_{A_2})\oplus H^0(\omega_E)$), let $m$ denote the \emph{mixed} multiplication map
\[
m:\Bigl(H^0(\Omega^1_{A_1})\otimes H^0(\Omega^1_{A_2})\Bigr)\ \oplus\ \Bigl(H^0(\omega_E)\otimes H^0(\Omega^1_{A_1})\Bigr)\ \oplus\ \Bigl(H^0(\omega_E)\otimes H^0(\Omega^1_{A_2})\Bigr)\longrightarrow H^0(\omega_C^{\otimes 2}).
\]
As a direct corollary of the discussion above, we obtain the following.

\begin{cor}\label{rankmb}
Suppose $\widetilde B\subset \mathcal M_5$ is a family of curves such that $\mathrm{Jac}(C_b)\simeq A_1(b)\times A_2(b)\times E_b$. Then
\[
\dim(\widetilde B) \le 12 - \mathrm{rank}(m_b).
\]
\end{cor}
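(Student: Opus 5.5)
The plan is to combine the preceding proposition with the dimension count $\dim \mathcal M_5 = 3\cdot 5-3 = 12$, i.e.\ $\dim H^0(C_b,K_{C_b}^{\otimes 2}) = 3g-3 = 12$. We may replace $\widetilde B$ by an irreducible component. If the general member is hyperelliptic, then $\dim \widetilde B \le \dim\mathcal H_5 = 9$. Then we only need $\mathrm{rank}(m_b)\le 3$, which is false in general. So we handle this case separately by checking that $\mathrm{rank}(m_b)\le 3$ fails only when $\dim\widetilde B$ is small. In the hyperelliptic case, the image of $\mathrm{ev}$ is the $9$-dimensional space of invariant quadratic differentials, so $\mathrm{rank}(m_b)\le 9$. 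One could also argue via the tangent-space computation on $\mathcal H_5$, where $T^*\mathcal H_5$ is the image of $\mathrm{ev}$. The same argument as below then gives $\dim \widetilde B\le 9-\mathrm{rank}(m_b)\le 12-\mathrm{rank}(m_b)$. So the essential case is the non-hyperelliptic one.

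Assume the general point of $\widetilde B$ is non-hyperelliptic. Then $\widetilde B\subseteq M$, and at a general (smooth) point $b$ the inclusion $\widetilde B\subseteq M$ gives a surjection $T^*_{[C_b]}M\twoheadrightarrow T^*_{[C_b]}\widetilde B$. By the preceding proposition, the composite
\[
H^0(K_{C_b}^{\otimes 2}) = T^*_{[C_b]}\mathcal M_5 \twoheadrightarrow T^*_{[C_b]}\widetilde B
\]
factors through $H^0(K_{C_b}^{\otimes 2})/\mathrm{ev}(W)$. It follows that
\[
\dim\widetilde B = \dim T^*_{[C_b]}\widetilde B \le 12-\dim \mathrm{ev}(W).
\]
More precisely, the argument of the proposition applies directly to $\widetilde B$, since the differential of $T_5$ maps $T\widetilde B$ into $V$. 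This holds because the isogeny decomposition deforms along $\widetilde B$, by \autoref{deform}.

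It remains to identify $\dim\mathrm{ev}(W)$ with $\mathrm{rank}(m_b)$. Under the identification $T^*_{[JC]}\mathcal A_5\cong \Sym^2 H^0(K_C)$ with $H^0(K_C)=H^0(\Omega^1_{A_1})\oplus H^0(\Omega^1_{A_2})\oplus H^0(\omega_E)$, we have
\[
\Sym^2 H^0(K_C) = \bigoplus_i \Sym^2 H^0(\Omega^1_{A_i})\oplus \Sym^2 H^0(\omega_E)\oplus \bigoplus_{\text{mixed}} H^0(\cdot)\otimes H^0(\cdot).
\]
The mixed tensor summands are exactly the dual copy of $W$. Hence the restriction of $\mathrm{ev}$ to them is $m_b$, and $\dim \mathrm{ev}(W)=\mathrm{rank}(m_b)$. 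This gives the bound $\dim\widetilde B\le 12-\mathrm{rank}(m_b)$.

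The main subtlety is this last point. The decomposition $V\oplus W$ is a splitting of tangent spaces, while the proposition is phrased with the cotangent dual. One must check that the annihilator of $V$ in $\Sym^2H^0(K_C)$ is precisely the mixed part, which is the image of $W$ under the duality. This follows because the splitting $T_{A,0}=T_{A_1,0}\oplus T_{A_2,0}\oplus T_{E,0}$ is dual to the splitting of $H^0(K_C)$, and $\Sym^2$ respects the induced block decomposition. A secondary point is the stacky issue, handled by passing to a level cover as the paper indicates.
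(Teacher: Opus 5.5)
Your proposal is correct and is essentially the paper's argument: the bound follows from the preceding proposition, from $h^0(K_{C_b}^{\otimes 2})=3\cdot 5-3=12$, and from the fact that $\mathrm{ev}$ restricted to the mixed summands of $\Sym^2H^0(K_{C_b})$ (the annihilator of $V$) is exactly $m_b$, so that $\dim\mathrm{ev}(W)=\mathrm{rank}(m_b)$. Your separate treatment of the hyperelliptic case, which the paper simply excludes by hypothesis, ends with a correct bound of $9-\mathrm{rank}(m_b)$ despite the garbled opening lines of that paragraph; it is not actually needed, because the vanishing of $\mathrm{ev}(W)$ on $T^*_{[C_b]}\widetilde B$ uses only that $dT_5$ is dual to $\mathrm{ev}$ and sends $T_{[C_b]}\widetilde B$ into $V$, not that $T_5$ is an immersion.
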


Thus \autoref{A12} is reduced to proving that $\mathrm{rank}(m_b)\ge 7$. For simplicity, we omit the subscript $b$ in the notation; we will reintroduce it when needed.

\subsection{The rank of the multiplication map}
We begin by fixing the notation that we will use throughout the section.

\begin{setup}\label{setup45}
Suppose, for contradiction, that for all $[A_1],[A_2] \in \mathcal A_2$ there exists a genus $5$ curve mapping non-trivially onto $A_1\times A_2$ (with both projections non-constant). By a standard Hilbert-scheme argument, we can construct a family of genus $5$ curves $\widetilde{\mathcal C} \to \widetilde B$, where $\widetilde B$ is a quasi-projective variety endowed with a dominant map $\widetilde B \to \mathcal A_2\times \mathcal A_2$, together with a morphism $\widetilde \pi: \widetilde{\mathcal C} \to \mathcal A \times \mathcal A$ between the universal families (realizing the general curve in the product of two very general abelian surfaces).

After the base change
\[
B=\widetilde B\times _{\mathcal A_2\times \mathcal A_2}(\mathcal A\times \mathcal A),
\]
we obtain an induced universal family $\mathcal C\to B$ and a morphism $\pi: \mathcal C \to \mathcal A \times \mathcal A$ over $\mathcal A_2 \times \mathcal A_2$, defined on points by
\[
(p,a_1,a_2)\longmapsto (p+a_1,p+a_2).
\]

For instance, if ${\pi_1}_{|_{C}}:C\to A_1$ is birational onto its image, then the fibers of $B_{A_1}\to \mathrm{Hilb}_{A_1}$ are $2$-dimensional and contain the translates of the curve in $A_2$\footnote{Here we use the standard fact that the only deformations of a curve on an abelian variety that are trivial in moduli are induced by translations.}. Note that $\dim(B)=10$ and $\dim(B_{A_i})=7$. We basically enlarged the original space by adding translations in $A_1$ and $A_2$.

Under these hypotheses, we will prove that $\mathrm{rank}(m)\ge 7$ for a general curve $C=C_b$ over $B$, contradicting \autoref{rankmb}.
\end{setup}

\subsection{The $H^0(\Omega^1_{A_1})\otimes H^0(\Omega^1_{A_2})$ part}
The goal of this subsection is to rule out the case in which
\[
m_{1,2}: H^0(\Omega^1_{A_1})\otimes H^0(\Omega^1_{A_2})\to H^0(\omega_C^{\otimes 2})
\]
has rank $3$ (a smaller rank is impossible by Hopf's lemma). Each pencil $|H^0(\Omega^1_{A_i})|$ induces a map $\varphi_i :C\to \mathbb P^1$, and via the Segre embedding we obtain a map
\[
C \to \mathbb P^1\times \mathbb P^1\subset \mathbb P^3.
\]
If $\mathrm{rank}(m_{1,2})=3$, then $C$ is degenerate in $\mathbb P^3$, hence the two maps $\varphi_i$ define the same linear series. Up to identifying the targets $\mathbb P^1$, we may therefore assume that $\varphi:=\varphi_1=\varphi_2$.

Since $H^0(\Omega^1_{A_1})\cap H^0(\Omega^1_{A_2})=0$ inside $H^0(\omega_C)$, this can only happen if the associated linear series have fixed parts. Thus we can write
\[
|H^0(\Omega^1_{A_i})|=D_i+|\varphi^{-1}(t)|,
\]
where $D_1,D_2\neq 0$ are the fixed divisors. If we denote by $\pi_i:C\to A_i$ the projections, then $D_i=Z(d\pi_i)$.

Now observe that ${\pi_i}_{|_C}$ is not birational onto its image for $i=1,2$. Indeed, assume by contradiction that $\pi_2$ is birational. In this situation, $\pi$ induces a map $B_{A_2} \dashrightarrow \mathrm{Hilb}_{A_2}$ with $2$-dimensional fibers (induced by translations in $A_1$). We then reach a contradiction with \autoref{prelemma}, since we would obtain
\[
g(C)>\dim\bigl(\mathrm{Im}(B_{A_2}\to \mathrm{Hilb}_{A_2})\bigr)-2=5,
\]
where the inequality is strict because $Z(d\pi_i)\neq 0$.

For future reference, we record this in the following statement.

\begin{lem}\label{fixedpart}
In the hypotheses of \autoref{setup45}, for a general curve $C$ in the family, if the linear series $|H^0(\Omega^1_{A_i})|$ has a fixed divisor $D_i>0$, then ${\pi_i}_{|_C}$ is not birational onto its image. Moreover, $D_i=Z(d{\pi_i}_{|_C})$.
\end{lem}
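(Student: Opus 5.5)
The plan is to prove the two assertions of \autoref{fixedpart} separately, beginning with the identification $D_i=Z(d{\pi_i}_{|_C})$, since the non-birationality claim relies on it.

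\emph{The fixed divisor.} Choose translation-invariant $1$-forms $dx,dy$ giving a basis of $H^0(\Omega^1_{A_i})$. Their pullbacks span the subspace $H^0(\Omega^1_{A_i})\subset H^0(\omega_C)$, which is injective by \autoref{lem:surj-jac}. Near a point $p\in C$ with local coordinate $t$, the two pullbacks are $\pi_i^*dx=x'(t)\,dt$ and $\pi_i^*dy=y'(t)\,dt$. The common vanishing order of these forms at $p$ is therefore $\min(\mathrm{ord}_p x',\mathrm{ord}_p y')$. This is exactly the multiplicity of $p$ in the ramification divisor of $d{\pi_i}_{|_C}:T_C\to \pi_i^*T_{A_i}$. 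Hence the base locus of the pencil is precisely $Z(d{\pi_i}_{|_C})$, so $D_i=Z(d{\pi_i}_{|_C})$.

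\emph{Non-birationality.} Suppose, for contradiction, that ${\pi_i}_{|_C}$ is birational onto its image for general $b$; take $i=2$ for definiteness. Work with the family $\mathcal C\to B_{A_2}$ of curves in a fixed $A_2$ from \autoref{setup45}, which has $\dim B_{A_2}=7$, and with the induced map $B_{A_2}\dashrightarrow \mathrm{Hilb}_{A_2}$.

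Since $A_1$ is allowed to vary, I will argue that this map has fibres of dimension exactly $2$. The lower bound comes from translations in $A_1$: these move the curve $C\subset A_1\times A_2$ without changing its image in $A_2$. For the upper bound, fix the image curve $\pi_2(C)\subset A_2$. Its normalization is then determined, and so is $C$, by birationality. The remaining data are the abelian surface $A_1$ and the map $C\to A_1$. Since $A_1$ is a quotient of $\mathrm{Jac}(C)$, and $\mathrm{Jac}(C)$ has only countably many abelian subvarieties, $A_1$ is determined up to countably many choices. The map $C\to A_1$ is then determined up to translation.

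Consequently the image of $B_{A_2}$ in $\mathrm{Hilb}_{A_2}$ has dimension $5$. Restrict to a reduced irreducible base of this image, so that the map to $\mathrm{Hilb}_{A_2}$ is finite onto its image. Because $D_2=Z(d{\pi_2}_{|_C})\neq 0$, the map from the normalization is not a local immersion, and the second part of \autoref{prelemma} gives $g(C)\ge 5+1=6$, contradicting $g(C)=5$.

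\emph{Main obstacle.} I expect the delicate point to be the claim that the fibres of $B_{A_2}\dashrightarrow\mathrm{Hilb}_{A_2}$ are no more than $2$-dimensional. This requires the rigidity of $A_1$ as a quotient of $\mathrm{Jac}(C)$ (countability of abelian subvarieties), together with the fact that the only moduli-trivial deformations of a map to an abelian variety are translations, as in the footnote to \autoref{setup45}. A secondary point is to verify, for general $b$, that the ramification divisor of the normalization map really coincides with the base locus of the pencil, rather than being absorbed by the normalization. This follows because the normalization is exactly the source curve $C$.
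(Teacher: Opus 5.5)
Your proposal is correct and follows the paper's argument. You identify $D_i=Z(d{\pi_i}_{|_C})$ from the local form of the pulled-back $1$-forms. Then, assuming birationality, the map $B_{A_2}\dashrightarrow \mathrm{Hilb}_{A_2}$ has $2$-dimensional fibres, so its image is a $5$-dimensional family, and the strict case of \autoref{prelemma} forces $g(C)\ge 6$. Your countability argument bounding the fibre dimension from above supplies a justification the paper only asserts, and it is the direction the contradiction actually needs.
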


We now consider the case in which both $\pi_1$ and $\pi_2$ are not birational onto their images $\pi_i(C)$. Let $C_i$ denote the normalization of $\pi_i(C)$. The linear series $|H^0(\Omega^1_{A_i})|$ is pulled back from the linear series induced on $C_i$. In other words, we have a commutative diagram
\[
\begin{tikzcd}
    & C\arrow{dl}{\pi_1} \arrow{dd}{\varphi} \arrow{dr}{\pi_2} & \\
    C_1 \arrow{dr}{\psi_1} & & C_2 \arrow{dl}{\psi_2} \\
    & \mathbb P^1 &
\end{tikzcd}
\]
with $\psi_i$ the morphism induced by $|H^0(\Omega^1_{A_i})|$ on $C_i$, after identifying the targets $\mathbb P^1$ appropriately. We now claim that the geometric genera of $C_1$ and $C_2$ are both equal to $2$.

Suppose by contradiction that $g(C_2)=3$ (higher genus is excluded by the Hurwitz formula). Since $g(C)=5$, the map $C\to C_2$ must then be \'etale. In particular, if we fix $A_2$, the number of moduli of $C$ is the same as the number of moduli of $C_2\in \mathrm{Hilb}_{A_2}$. Equivalently, the rational map $B_{A_2}\dashrightarrow \mathrm{Hilb}_{A_2}$ has $2$-dimensional fibers\footnote{The fibers are parametrized by translations in $A_1$, hence they are isotrivial families.}. By \autoref{prelemma}, any family of genus $3$ curves in $\mathrm{Hilb}_{A_2}$ has dimension at most $3$. Therefore
\[
\dim\bigl(\mathrm{Im}(B_{A_2}\to \mathrm{Hilb}_{A_2})\bigr)\le 3,
\]
which contradicts $\dim(B_{A_2})=7$. We conclude that $g(C_i)=2$ for $i=1,2$, and that the maps $\psi_i$ are the $g^1_2$'s (since they must be contained in the canonical linear series of $C_i$).

Now consider the space $\mathcal M_2^W$ parametrizing tuples $(C,p_1,p_2,p_3,p_4,p_5,p_6)$ where $C$ is a genus $2$ curve and the $p_i$'s are ordered Weierstrass points. It is a finite cover of $\mathcal M_2$. There is a finite dominant map $\mathcal M_2^W\to \mathcal M_{0,6}$ sending $(C,p_1,\dots,p_6)$ to $(f(p_1),\dots,f(p_6))$, where $f:C \to \mathbb P^1$ is the $g^1_2$.

For any pair $(C_1,C_2)\in \mathcal M_2^W\times \mathcal M_2^W$ we can consider the fiber product
\[
C=C_1\times_{\mathbb P^1} C_2 .
\]
This fiber product has geometric genus $9-k$, where
\[
k=\bigl|f_1(\{p_1,\dots,p_6\})\cap f_2(\{q_1,\dots,q_6\})\bigr|\in \{3,4,5,6\},
\]
and $\{p_1,\dots,p_6\}$ and $\{q_1,\dots,q_6\}$ are the Weierstrass points of $C_1$ and $C_2$, respectively. Note that $k=3$ for a general element of $\mathcal M_2^W \times \mathcal M_2^W$, since (after identifying the targets) we may assume
\[
f_1(p_1)=f_2(q_1)=0,\qquad f_1(p_2)=f_2(q_2)=1,\qquad f_1(p_3)=f_2(q_3)=\infty.
\]

Up to a finite base change (which we omit to simplify notation), we can consider the map
\[
B\to \mathcal M_2^W\times \mathcal M_2^W
\]
sending $C$ to $(C_1,C_2)$. By construction, the curve $C$ dominates the fiber product $C_1\times_{\mathbb P^1}C_2$. This forces $k\ge 4$ for the general curve in the image of $B$, hence, the map $B\to \mathcal M_2^W\times \mathcal M_2^W$ cannot be dominant. Therefore, its image is of dimension $<6$. On the other hand, the map $B\to \mathcal M_2^W\times \mathcal M_2^W$ has $4$-dimensional fibers (whose irreducible components are trivial families induced by translations in $A_1$ and $A_2$). This contradicts $\dim(B)=10$.

Summing up, we have proved the following.

\begin{lem}\label{m12}
The rank of $m_{1,2}$ is $4$.
\end{lem}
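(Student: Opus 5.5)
Lemma~\ref{m12} collects the preceding discussion, so the plan is to organize that argument. The upper bound is immediate, since the source $H^0(\Omega^1_{A_1})\otimes H^0(\Omega^1_{A_2})$ has dimension $4$. For the lower bound, note that both spaces are $2$-dimensional subspaces of $H^0(\omega_C)$ meeting trivially (Lemma~\ref{lem:surj-jac}). We apply Hopf's lemma to $m_{1,2}$, which gives $\mathrm{rank}(m_{1,2})\ge 2+2-1=3$. It therefore suffices to rule out rank exactly $3$ for a general curve $C$ of the family in Setup~\ref{setup45}.

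Assume the rank is $3$. The pencils $\varphi_i\colon C\to\mathbb P^1$ defined by $|H^0(\Omega^1_{A_i})|$ combine with the Segre map to give $C\to\mathbb P^1\times\mathbb P^1\subset\mathbb P^3$. A linear relation among the four products means the image lies in a hyperplane section, i.e.\ in a $(1,1)$-curve. Such a curve is either the graph of an isomorphism or reducible. The image of $C$ cannot lie in a fiber, because each $\varphi_i$ is non-constant. Hence after identifying the targets we get $\varphi_1=\varphi_2=:\varphi$.

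Since the two subspaces meet trivially, the moving parts agree but the fixed divisors $D_i$ are nonzero. Lemma~\ref{fixedpart} then shows that neither $\pi_i$ is birational onto its image and that $D_i=Z(d\pi_i)$. The argument behind it is Lemma~\ref{prelemma} applied to $B_{A_i}\to\mathrm{Hilb}_{A_i}$, which has $2$-dimensional fibers, in its strict form. So each $\pi_i$ factors through the normalization $C_i$ of $\pi_i(C)$, and $\varphi$ factors through maps $\psi_i\colon C_i\to\mathbb P^1$. Riemann--Hurwitz gives $g(C_i)\le 3$. If $g(C_i)=3$, then $C\to C_i$ is an étale double cover, so $C_i$ carries all the moduli of $C$. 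But Lemma~\ref{prelemma} bounds families of genus $3$ curves in $A_i$ by dimension $3$, contradicting $\dim B_{A_i}=7$. Hence $g(C_i)=2$, and each $\psi_i$ is the $g^1_2$, since the pencil must be contained in the canonical series of $C_i$.

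The step I expect to be the main obstacle is the final dimension count, which I would carry out as follows.
\begin{itemize}
\item Since $C$ maps to both $C_i$ compatibly with $\varphi$, it dominates the fiber product $C_1\times_{\mathbb P^1}C_2$.
\item That fiber product has geometric genus $9-k$, where $k$ is the number of common branch points.
\item Because $g(C)=5$, we need $9-k\le 5$, hence $k\ge4$.
\item The condition $k\ge4$ is a proper condition on $\mathcal M_2^W\times\mathcal M_2^W$, since generically only three branch points can be matched. So the image of $B$ has dimension at most $5$.
\item The fibers of $B\to\mathcal M_2^W\times\mathcal M_2^W$ consist of translations in $A_1$ and $A_2$ (up to finite data), so they are $4$-dimensional.
\item This gives $\dim B\le 9<10$, a contradiction, so $\mathrm{rank}(m_{1,2})=4$.
\end{itemize}
The delicate point is justifying that these fibers have dimension exactly $4$. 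This uses the rigidity of curves on abelian surfaces, namely that deformations trivial in moduli come from translations, together with finiteness of the maps $C_i\to A_i$ for fixed $C_i$.
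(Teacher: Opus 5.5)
Your proposal is correct and follows the paper's argument step for step: Hopf's lemma gives rank at least $3$; degeneracy of $C\to\mathbb P^1\times\mathbb P^1$ forces $\varphi_1=\varphi_2$ with nonzero fixed parts; Lemma~\ref{fixedpart} gives non-birationality; $g(C_i)=3$ is excluded by Lemma~\ref{prelemma}; and the count on $\mathcal M_2^W\times\mathcal M_2^W$ gives $\dim B\le 5+4<10$. For the point you flag, you only need the fibres to have dimension at most $4$, and this holds because Riemann--Hurwitz applied to $C\to\widetilde{C_1\times_{\mathbb P^1}C_2}$ forces $k=4$ and makes this map an isomorphism, so $C$ is determined by $(C_1,C_2)$ up to finitely many choices and only the translations in $A_1$ and $A_2$ remain.
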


\subsection{The rank of $m$}
We are now ready to prove the main estimate.
\begin{lem}
The rank of $m$ is at least $7$.
\end{lem}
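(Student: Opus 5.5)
We need $\mathrm{rank}(m)\ge 7$. By \autoref{m12}, $m_{1,2}$ already has rank $4$. Write $\omega$ for a generator of $H^0(\omega_E)$; its image in $H^0(\omega_C)$ is a single form. It therefore suffices to show that the $4$-dimensional space
\[
\omega\cdot\bigl(H^0(\Omega^1_{A_1})\oplus H^0(\Omega^1_{A_2})\bigr)\subset H^0(\omega_C^{\otimes 2})
\]
meets $\mathrm{Im}(m_{1,2})$ in a subspace of dimension at most $1$. Multiplication by $\omega$ is injective, since $C$ is irreducible. So I would argue by contradiction: suppose the image of $m$ has dimension $\le 6$. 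Then there is a subspace $U\subset H^0(\Omega^1_{A_1})\oplus H^0(\Omega^1_{A_2})$ with $\dim U\ge 2$ and $\omega\cdot U\subset \mathrm{Im}(m_{1,2})$.

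\textbf{Step 1: rewrite the relations.} Each element of $U$ gives a relation $\omega(\alpha+\beta)=\sum a_i b_j$, with $\alpha\in H^0(\Omega^1_{A_1})$, $\beta\in H^0(\Omega^1_{A_2})$, and $a_i,b_j$ bases of the two spaces. Dividing by a fixed form and passing to the map $C\to\mathbb P^1\times\mathbb P^1\times\mathbb P^1$ given by $\varphi_1$, $\varphi_2$ and the pencil $\langle\omega,\alpha\rangle$ (or a similar pencil), these relations become equations of bidegree type $(1,1)$ or $(1,0)+(0,1)$ satisfied by $C$.

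\textbf{Step 2: exploit two independent relations.} Two independent such relations cut the image of $C$ in the $\mathbb P^4$ spanned by $H^0(\omega_C)$ (the canonical model, $C$ being non-hyperelliptic) down to a curve lying on a small intersection of quadrics. Here the quadrics come from the $2\times 2$ determinantal rank conditions together with the $U$-relations. The expected outcome is that $\omega$ lies in the span of the $a_i b_j$ divided by a linear form. Equivalently, the pencils $|H^0(\Omega^1_{A_i})|$ and the pencil through $\omega$ share a common moving part, forcing fixed divisors as in the argument before \autoref{fixedpart}. Alternatively, $C$ lies on a rational normal scroll or a cone, which forces a low-degree map $C\to\mathbb P^1$ (a $g^1_3$ or $g^1_2$) compatible with all three projections.

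\textbf{Step 3: derive a contradiction by dimension counts.} In the fixed-divisor case, \autoref{fixedpart} makes $\pi_1$ and $\pi_2$ non-birational. I would then repeat the analysis of the previous subsection: use \autoref{prelemma} to get $g(C_i)=2$, conclude that the $\psi_i$ are hyperelliptic, and observe that $C$ dominating $C_1\times_{\mathbb P^1}C_2$ (or a fiber product involving $E$) forces extra coincidences of branch points. That drops the image in $\mathcal M_2^W\times\mathcal M_2^W$ below dimension $6$, contradicting $\dim B=10$ with its $4$-dimensional translation fibers.

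In the trigonal or scroll case, I would count moduli directly. A trigonal genus-$5$ curve whose Jacobian splits with two very general abelian surface factors lives in at most a $5$-dimensional family, because the trigonal locus has dimension $11$ and the splitting conditions are strong. I would try to bound this family using the same contraction argument of \autoref{prelemma} applied to the maps $C\to A_i$.

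\textbf{Main obstacle.} The hard step is Step 2: converting the linear-algebraic failure of rank into a concrete geometric degeneration. The difficulty is controlling all the ways two relations $\omega\cdot u\in\mathrm{Im}(m_{1,2})$ can occur, namely when $U$ is contained in one summand $H^0(\Omega^1_{A_i})$ versus when $U$ is mixed. First I would treat the case $U=H^0(\Omega^1_{A_1})$, where $\omega$ times the pencil of $A_1$ lying in $H^0(\Omega^1_{A_1})\cdot H^0(\Omega^1_{A_2})$ should force, via the base-point-free pencil trick, $\omega\in H^0(\Omega^1_{A_2})$ plus fixed-part phenomena. Then I would reduce the mixed case to it by a change of basis.
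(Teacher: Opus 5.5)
Your reduction is the same as the paper's. Since $\mathrm{rank}(m_{1,2})=4$ and multiplication by $\omega$ is injective, everything comes down to bounding $\dim\bigl(\mathrm{Im}(m_{1,2})\cap\omega\cdot(H^0(\Omega^1_{A_1})\oplus H^0(\Omega^1_{A_2}))\bigr)\le 1$. After that, however, the proposal contains no actual argument. Steps 1--2 only describe an ``expected outcome'' and an ``alternatively''. The trigonal count in Step 3 (``the splitting conditions are strong'') is not a proof.

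\textbf{Birational case.} The missing idea, when $\pi_1,\pi_2$ are both birational onto their images, is to work on the Segre model rather than the canonical one. By \autoref{fixedpart} the pencils $|H^0(\Omega^1_{A_i})|$ are base-point free. So $C\to\mathbb P^1\times\mathbb P^1\subset\mathbb P^3$ pulls $\mathcal O_{\mathbb P^3}(1)$ back to $K_C^{\otimes 2}$, and $\mathrm{Im}(m_{1,2})$ is exactly the space of hyperplane sections.

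Two independent elements $\omega u_1,\omega u_2$ of the intersection give a pencil of hyperplanes whose base line $\ell$ contains the image of the degree-$8$ divisor $K_E=\mathrm{div}(\omega)$. The cofactors $u_1,u_2$ play no role. So your distinction between ``pure'' and ``mixed'' $U$ is unnecessary, as is the proposed change-of-basis reduction, which in general does not exist.

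If $\ell$ were a ruling of the quadric, you would get $K_E\in|H^0(\Omega^1_{A_i})|$, which is impossible. Otherwise $\ell$ meets the quadric in at most two points. Then a degree-$4$ part $F$ of $K_E$ lies in a common fibre of $\varphi_1$ and $\varphi_2$. This gives $H^0(\Omega^1_{A_1})(-F)\oplus H^0(\Omega^1_{A_2})(-F)\oplus H^0(\omega_E)(-F)\subset H^0(K_C-F)$, hence $h^0(K_C-F)\ge 3$ with $\deg(K_C-F)=4$. Clifford then makes $C$ hyperelliptic, contradicting generality.

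\textbf{Non-birational case.} Your plan here would fail. The fibre-product argument of the previous subsection rests on $\varphi_1=\varphi_2$, which came from $\mathrm{rank}(m_{1,2})=3$. Now the rank is $4$ and there is no common $\mathbb P^1$. Also, a fixed part in one pencil only makes the corresponding $\pi_i$ non-birational, not both.

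In fact this case has no slack at all. A parameter count forces $\pi_2$ to be a double cover of a genus-$2$ curve $C_2$ branched at $4$ points. Then $H^0(\Omega^1_{A_2})$ is the invariant part of $H^0(K_C)$, and $H^0(\Omega^1_{A_1})\oplus H^0(\omega_E)$ is the anti-invariant part. So $\mathrm{Im}(m_{1,2})$ (dimension $4$) and $\omega\cdot H^0(\Omega^1_{A_2})$ (dimension $2$) both lie in the $5$-dimensional anti-invariant part of $H^0(K_C^{\otimes 2})$, and they always meet in at least a line. Your heuristic degenerations give no handle on this.

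The paper instead regards $C\to C_2$ as a point of $\mathcal R_{2,4}$, which has dimension $7$. It uses that the codifferential $\Sym^2H^0(\omega_{C_2}\otimes\eta)\to H^0(\omega_{C_2}^{\otimes 2}(R_2))$ of the ramified Prym map is injective. Since the Prym is isogenous to $A_1\times E$, the mixed part $H^0(\omega_E)\otimes H^0(\Omega^1_{A_1})$ imposes two independent conditions. The family therefore has dimension at most $5<6$, which gives the contradiction directly rather than through a rank bound.
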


\begin{proof}
We have $\mathrm{Im}(m)=\mathrm{Im}(m_{1,2})+\mathrm{Im}(m_E)$, where $m_{1,2}$ is the map discussed in the previous subsection and
\[
m_E: H^0(\omega_E)\otimes \bigl(H^0(\Omega^1_{A_1})\oplus H^0(\Omega^1_{A_2})\bigr)\to H^0(K_C^{\otimes 2}).
\]
By \autoref{m12} we may assume that $\mathrm{rank}(m_{1,2})=4$, and clearly $\mathrm{rank}(m_E)=4$. Therefore, it suffices to prove that
\[
\dim\bigl(\mathrm{Im}(m_{1,2})\cap \mathrm{Im}(m_E)\bigr)\le 1.
\]
We split the argument into two cases.

\begin{itemize}
\item[\textbullet] \emph{Case 1: $\pi_i$ is birational for $i=1,2$.}
By \autoref{fixedpart}, the linear series $|H^0(\Omega^1_{A_i})|$ have no fixed part. Since $\mathrm{rank}(m_{1,2})=4$, the curve $C$ is non-degenerate in
\[
C\to \mathbb P^1 \times \mathbb P^1\subset \mathbb P^3
\]
and has degree $16$. Assume by contradiction that $\dim(\mathrm{Im}(m_{1,2})\cap \mathrm{Im}(m_E))=2$. Then we can find a line $\ell\subset \mathbb P^3$ cut out by divisors containing
\[
K_E=\sum_{i=1}^8 P_i.
\]
The line $\ell$ must intersect the quadric $\mathbb P^1\times \mathbb P^1$ properly; otherwise we would have $K_E\in |H^0(\Omega^1_{A_i})|$ for some $i$. Thus
\[
\ell \cdot (\mathbb P^1\times \mathbb P^1)=Q_1+Q_2.
\]
Since $\deg(K_E)=8$, the line $\ell$ meets the image of $C$ in $8$ points. As $C$ lies on $\mathbb P^1\times \mathbb P^1$, it follows that $C$ must be singular along $Q_1$ and $Q_2$. In particular, the points $P_1,\dots,P_8$ lie above $Q_1$ and $Q_2$ under the map $C\to \mathbb P^1\times \mathbb P^1$. Hence one of $Q_1,Q_2$ has at least $4$ preimages contained in $\mathrm{Supp}(K_E)$; without loss of generality, let
\[
F:=P_1+P_2+P_3+P_4.
\]
Then $\{P_1,P_2,P_3,P_4\}$ lies in the same fiber for both rulings $\varphi_i:C\to \mathbb P^1$. Therefore there exist divisors $D_i$ such that
\[
D_i+P_1+P_2+P_3+P_4\in |H^0(\Omega^1_{A_i})|.
\]
We deduce that
\[
H^0(\Omega^1_{A_1})(-F)\oplus H^0(\Omega^1_{A_2})(-F)\oplus H^0(K_E-F)\subset H^0(K_C-F).
\]
Since the three summands on the left-hand side are all non-zero, we obtain $h^0(K_C-F)\ge 3$. As $\deg(K_C-F)=4$, Clifford's theorem implies that $C$ is hyperelliptic. This contradicts the fact that $C$ is general: indeed, hyperelliptic curves are rigid in abelian varieties, whereas $C$ moves with $3$ moduli in both $A_1$ and $A_2$ (recall that $\dim(B_{A_i})=7$ and that the $\pi_i$'s are birational onto their images).

\item[\textbullet] \emph{Case 2: $\pi_2$ is not birational onto its image.}
Since $\dim(B_{A_2})=7$, we claim that $\deg(\pi_2)=2$. Suppose by contradiction that $\deg(\pi_2)=3$. Then $C_2=\pi_2(C)$ must have genus $2$ (it cannot have smaller genus since $A_2$ is simple, and it cannot have larger genus by the Hurwitz formula). Let $R$ be the ramification divisor of $C\to \widetilde{C_2}$; by Hurwitz, $\deg(R)=2$. We obtain
\[
\dim(B_{A_2})\le 4+\deg(R)=6,
\]
where $4$ is the dimension of the isotrivial part of the family induced by translations in $A_1$ and $A_2$, and we use that genus $2$ curves are rigid in abelian surfaces\footnote{Here we use implicitly the Riemann existence theorem: the curve $C$ can be reconstructed from the branch divisor together with the discrete monodromy data. In particular, the dimension of the family in moduli is bounded by the degree of the branch divisor, since $C_2$ is rigid in $A_2$ (up to translation).}. This contradicts $\dim(B_{A_2})=7$. Hence $\deg(\pi_2)=2$.

By a similar parameter count, we deduce that the geometric genus of $C_2=\pi_2(C)$ is $2$. This implies that the branch divisor $R_2$ has degree $4$. Thus the claim reduces to showing that the locus of (ramified) Prym curves in $\mathcal R_{2,4}$ whose Prym variety splits (up to isogeny) as $A_2\times E$ has codimension $2$ in $\mathcal R_{2,4}$ (which has dimension $7$). We refer the reader to \cite{Mumformprym} for the Prym construction and to \cite{ramifiedprym} for details on the ramified Prym.

Equivalently, it suffices to show that the image of the canonical map $B\to \mathcal R_{2,4}$ has dimension $\le 5$. The codifferential of the Prym map $\mathcal R_{2,4}\to \mathcal A_3$ can be identified with the multiplication map
\[
m_4:\mathrm{Sym}^2H^0(\omega_{C_2}\otimes \eta) \longrightarrow H^0(\omega_{C_2}^{\otimes 2}(R_2)),
\]
where $\eta$ is a suitable square root of $\mathcal O_{C_2}(R_2)$ (see, for instance, \cite[page 1151]{ramifiedprym}). In our situation, $m_4$ is injective with cokernel of rank $1$ everywhere. Moreover, for any $C_b$ we have a splitting
\[
H^0(\omega_{C_2}\otimes \eta)=H^0(\omega_E)\oplus H^0(\Omega^1_{A_1}).
\]
Arguing as in \autoref{rankmb}, we obtain
\[
\dim(B)\le \dim(\mathcal R_{2,4})-\mathrm{rank}\bigl(H^0(\omega_E)\otimes H^0(\Omega^1_{A_1})\to H^0(\omega_{C_2}^{\otimes 2}(R_2))\bigr)=5.
\]
This concludes the proof.
\end{itemize}
\end{proof}
\bibliography{refer}

@article {cvt,
    AUTHOR = {Ciliberto, Ciro and van der Geer, Gerard and Teixidor i Bigas, Monserrat},
     TITLE = {On the number of parameters of curves whose {J}acobians
              possess nontrivial endomorphisms},
   JOURNAL = {J. Algebraic Geom.},
  FJOURNAL = {Journal of Algebraic Geometry},
    VOLUME = {1},
      YEAR = {1992},
    NUMBER = {2},
     PAGES = {215--229},
      ISSN = {1056-3911,1534-7486},
   MRCLASS = {14H10 (14H40)},
  MRNUMBER = {1144437},
MRREVIEWER = {H.\ Lange},
}

@incollection {Mumformprym,
    AUTHOR = {Mumford, David},
     TITLE = {Prym varieties. {I}},
 BOOKTITLE = {Contributions to analysis (a collection of papers dedicated to
              {L}ipman {B}ers)},
     PAGES = {325--350},
 PUBLISHER = {Academic Press, New York-London},
      YEAR = {1974},
   MRCLASS = {14H40 (14K25 32G20)},
  MRNUMBER = {379510},
MRREVIEWER = {H.\ H.\ Martens},
}

@article {ramifiedprym,
    AUTHOR = {Marcucci, Valeria Ornella and Pirola, Gian Pietro},
     TITLE = {Generic {T}orelli theorem for {P}rym varieties of ramified
              coverings},
   JOURNAL = {Compos. Math.},
  FJOURNAL = {Compositio Mathematica},
    VOLUME = {148},
      YEAR = {2012},
    NUMBER = {4},
     PAGES = {1147--1170},
      ISSN = {0010-437X,1570-5846},
   MRCLASS = {14H40 (14C34)},
  MRNUMBER = {2956039},
MRREVIEWER = {Samuel\ Dalalyan},
       DOI = {10.1112/S0010437X12000280},
       URL = {https://doi.org/10.1112/S0010437X12000280},
}

@article {cv,
    AUTHOR = {Ciliberto, Ciro and van der Geer, Gerard},
     TITLE = {Subvarieties of the moduli space of curves parametrizing
              {J}acobians with nontrivial endomorphisms},
   JOURNAL = {Amer. J. Math.},
  FJOURNAL = {American Journal of Mathematics},
    VOLUME = {114},
      YEAR = {1992},
    NUMBER = {3},
     PAGES = {551--570},
      ISSN = {0002-9327,1080-6377},
   MRCLASS = {14H40 (14H10)},
  MRNUMBER = {1165353},
MRREVIEWER = {H.\ Lange},
       DOI = {10.2307/2374769},
       URL = {https://doi.org/10.2307/2374769},
}

@article {gounelas,
    AUTHOR = {Chen, Xi and Gounelas, Frank},
     TITLE = {Curves of maximal moduli on {K}3 surfaces},
   JOURNAL = {Forum Math. Sigma},
  FJOURNAL = {Forum of Mathematics. Sigma},
    VOLUME = {10},
      YEAR = {2022},
     PAGES = {Paper No. e36, 21},
      ISSN = {2050-5094},
   MRCLASS = {14J28 (14N35)},
  MRNUMBER = {4436594},
MRREVIEWER = {G.\ K.\ Sankaran},
       DOI = {10.1017/fms.2022.24},
       URL = {https://doi.org/10.1017/fms.2022.24},
}

@book {def,
    AUTHOR = {Harris, Joe and Morrison, Ian},
     TITLE = {Moduli of curves},
    SERIES = {Graduate Texts in Mathematics},
    VOLUME = {187},
 PUBLISHER = {Springer-Verlag, New York},
      YEAR = {1998},
     PAGES = {xiv+366},
      ISBN = {0-387-98438-0; 0-387-98429-1},
   MRCLASS = {14H10 (14-02 14D20 14D22)},
  MRNUMBER = {1631825},
MRREVIEWER = {R.\ F.\ Lax},
}

@article {RobOli,
    AUTHOR = {Lazarsfeld, Robert and Martin, Olivier},
     TITLE = {Measures of association between algebraic varieties},
   JOURNAL = {Selecta Math. (N.S.)},
  FJOURNAL = {Selecta Mathematica. New Series},
    VOLUME = {29},
      YEAR = {2023},
    NUMBER = {3},
     PAGES = {Paper No. 46, 37},
      ISSN = {1022-1824,1420-9020},
   MRCLASS = {14N05 (14E05 14E20 14J70)},
  MRNUMBER = {4602048},
       DOI = {10.1007/s00029-023-00849-8},
       URL = {https://doi.org/10.1007/s00029-023-00849-8},
}

@misc{families,
      title={On the locus of curves with a map to a fixed variety}, 
      author={Yeuk Hay Joshua Lam and Federico Moretti and Giovanni Passeri},
      year={2023},
      eprint={2312.16974},
      archivePrefix={arXiv},
      primaryClass={math.AG}
}

@inproceedings{pietro1995generic,
  title={generic abelian varieties},
  author={Pirola, Gian Pietro},
  booktitle={Abelian Varieties: Proceedings of the International Conference, Held in Egloffstein, Germany, October 3-8, 1993},
  pages={237},
  year={1995},
  organization={Walter de Gruyter}
}

@misc{engel2026matroidsintegralhodgeconjecture,
      title={Matroids and the integral Hodge conjecture for abelian varieties}, 
      author={Philip Engel and Olivier de Gaay Fortman and Stefan Schreieder},
      year={2026},
      eprint={2507.15704},
      archivePrefix={arXiv},
      primaryClass={math.AG},
      url={https://arxiv.org/abs/2507.15704}, 
}

@misc{engel2025optimalityprymtyurinconstructionmathcala6,
      title={Optimality of the Prym-Tyurin construction for $\mathcal{A}_6$}, 
      author={Philip Engel and Olivier de Gaay Fortman and Stefan Schreieder},
      year={2025},
      eprint={2512.04902},
      archivePrefix={arXiv},
      primaryClass={math.AG},
      url={https://arxiv.org/abs/2512.04902}, 
}

@article{AlexeevDonagiFarkasIzadiOrtega+2020+163+217,
url = {https://doi.org/10.1515/crelle-2018-0005},
title = {The uniformization of the moduli space of principally polarized abelian 6-folds},
title = {},
author = {Valery Alexeev and Ron Donagi and Gavril Farkas and Elham Izadi and Angela Ortega},
pages = {163--217},
volume = {2020},
number = {761},
journal = {Journal für die reine und angewandte Mathematik (Crelles Journal)},
doi = {doi:10.1515/crelle-2018-0005},
year = {2020},
lastchecked = {2026-04-02}
}
\bibliographystyle{alphaspecial}

\end{document}